\newcommand{\D}{\mathrm{D}}
\newcommand{\E}{\mathrm{E}}
\newcommand{\runinhead}{\vspace{3mm} \noindent\textbf}
\newtheorem{theorem}{Theorem}[section]
\newtheorem{lemma}[theorem]{Lemma}
\newtheorem{proposition}[theorem]{Proposition}
\theoremstyle{definition}
\newtheorem{ass}[theorem]{Assumption}
\DeclareMathOperator{\supp} {supp}
\DeclareMathOperator{\esssup} {ess\, sup}
\newcommand{\1}{{\mathchoice {1\mskip-4mu\mathrm l}
{1\mskip-4mu\mathrm l}
{1\mskip-4.5mu\mathrm l} {1\mskip-5mu\mathrm l}}}
\newcommand{\zd}{\mathbb{Z}^d}
\newcommand{\nn}{\mathbb{N}}
\newcommand{\rd}{\mathbb{R}^d}
\newcommand{\rr}{\mathbb{R}}
\newcommand{\ee}{\mathbb{E}}
\newcommand{\pp}{\mathbb{P}}
\newcommand{\kh}{K_H}
\newcommand{\masz}{\mathcal{M}_1}
\newcommand{\chid}{\chi^{\mathrm{d}}}
\newcommand{\chic}{\chi^{\mathrm{c}}}
\newcommand{\erwxi}[1]{\langle{#1}\rangle}
\newcommand{\Erwxi}[1]{\big\langle{#1}\big\rangle}
\newcommand{\norm}[1]{\Arrowvert{#1}\Arrowvert}
\newcommand{\floor}[1]{\lfloor {#1}\rfloor}
\newcommand{\ssup}[1] {{{\scriptscriptstyle{({#1}})}}}
\newcommand{\scal}[2]{\big(#1,#2\big)}
\newcommand{\scaltext}[2]{(#1,#2)}
\renewcommand{\subset}{\subseteq}
\begin{document}

\title[The PAM with Acceleration and Deceleration]{The Parabolic Anderson Model\smallskip\\ with Acceleration and Deceleration}

\author[Wolfgang K\"onig and Sylvia Schmidt]{}

\maketitle

\thispagestyle{empty}
\vspace{0.2cm}

\centerline{By {\sc Wolfgang K\"onig}\footnote{Weierstra\ss-Institut Berlin, Mohrenstr.~39, 10117 Berlin, and Institut f\"ur Mathematik, Technische Universit\"at Berlin, Str.~des 17.~Juni 136, 10623 Berlin, Germany, {\tt koenig@wias-berlin.de}} and {\sc Sylvia Schmidt}\footnote{Mathematisches Institut, Universit\"at Leipzig, Augustusplatz 10/11, 04009 Leipzig, Germany, {\tt sylvia.schmidt@math.uni-leipzig.de}} }

\vspace{1cm}

\begin{quote}
{\it Abstract.} We describe the large-time moment asymptotics for the parabolic Anderson model where the speed of the diffusion is coupled with time, inducing an acceleration or deceleration. We find a lower critical scale, below which the mass flow gets stuck. On this scale, a new interesting variational problem arises in the description of the asymptotics. Furthermore, we find an upper critical scale above which the potential enters the asymptotics only via some average, but not via its extreme values. We make out altogether five phases, three of which can be described by results that are qualitatively similar to those from the constant-speed parabolic Anderson model in earlier work by various authors. Our proofs consist of adaptations and refinements of their methods, as well as a variational convergence method borrowed from finite elements theory.
\end{quote}

\vfill

\noindent{\it MSC 2000.} 35K15, 82B44, 60F10, 60K37.\\

\noindent{\it Keywords and phrases.} Parabolic Anderson model, moment asymptotics, variational formulas, accelerated and decelerated diffusion, large deviations, random walk in random scenery.

\numberwithin{equation}{section}

\eject

\section{Introduction}

We consider the solution $u^{\ssup t}\colon [0,\infty)\times\zd\to[0,\infty)$, $t>0$, to the Cauchy problem for the heat equation with random coefficients and $t$-dependent diffusion rate,
\begin{align}
\label{pam}
\frac{\partial}{\partial s} u^{\ssup t}(s,z) &=\kappa(t) \Delta u^{\ssup t}(s,z)+\xi(z)u^{\ssup t}(s,z), \quad s>0,\, z\in\zd, \\
u^{\ssup t}(0,\cdot) &= \1_0, \nonumber
\end{align}
where $\Delta$ is the discrete Laplacian,
$$
\Delta f(z) = \sum_{x\in\zd\colon |x-z|=1}[f(x)-f(z)],
$$
$(\xi(z))_{z\in\zd}$ is a field of independent and identically distributed random variables, and $\kappa\colon[0,\infty)\to[0,\infty)$ is a~function with $\lim_{t\to\infty}t\kappa(t)=\infty$. Our main goal is to understand the asymptotic behaviour as $t\to\infty$ of the expected total mass at time $t$,
\begin{equation*}
U(t) = \sum_{z\in\zd} u^{\ssup t}(t,z).
\end{equation*}
The total mass may be represented in terms of the famous {\it Feynman--Kac formula},
\begin{equation}\label{FKform}
U(t)=\ee_0^{\ssup t}\Big[\exp\Big\{\int_0^t\xi(X_s)\,\D s\Big\}\Big],
\end{equation}
where $(X_s)_{s\in[0,\infty)}$ is a random walk with generator $2d\kappa(t)\Delta$, starting from zero under $\ee_0^{\ssup t}$. Denoting by $\erwxi{\,\cdot\,}$ the expectation with respect to the random potential~$\xi$, we will study the logarithmic asymptotics of~$\erwxi{U(t)}$ for various choices of the diffusion function $t\mapsto \kappa(t)$.

The model with constant diffusion rate $\kappa(t)\equiv 1$ has been analysed in \cite{GM98} and~\cite{BK01} for three important classes of tail distributions of~$\xi(0)$, see also \cite{GK05} for a survey and \cite{CM94} for more background. In \cite{HKM06} a~classification of all potential distributions into four universality classes was made out such that the qualitative behaviour of~$\erwxi{U(t)}$ in each of the classes is similar. This classification holds under mild regularity assumptions and depends only on the upper tails of the potential. Heuristically, the main effect in each of these classes is the concentration of the total mass on a so-called intermittent island the size of which is $t$-dependent and deterministic. The (rescaled) shape of the solution and the potential on this island can be described by a deterministic variational formula. The thinner the tails of the potential distribution are, the larger the islands are, ranging from single sites to large areas, however still having a radius $\ll t^{1/d}$.

In~\eqref{pam}, the diffusion is coupled with time so that it is accelerated if the diffusion function $t\mapsto\kappa(t)$ grows or decelerated if it decreases. Now an interesting competition between the speed of the diffusion and the thickness of the tails of the potential distribution arises: the faster $\kappa(t)$ is, the stronger the flattening effect of the diffusion term is. One rightfully expects that if the speed of this function is not too extreme, then similar formulas should be valid as for constant diffusion rate. Indeed, we will identify a lower critical scale for $\kappa(t)$, which depends on the upper tails of the potential distribution, and marks the threshold below which the mass does not flow unboundedly far away from the origin in the Feynman--Kac formula, see below Assumption~\ref{ass_h}. Then we are in the case of \cite{GM98}.
Furthermore, we will see that -- if $\kappa(t)$ is above this lower critical scale -- $t^{2/d}$ presents an upper critical scale in the sense that, for $\kappa(t)\ll t^{2/d}$, the main contribution to the total mass comes from extremely high potential values, while for $\kappa(t)\approx t^{2/d}$, it comes from just super-average, but not extreme, values. This is reflected by the fact that the asymptotics can be described in terms of the upper tails of the potential distribution in the former case (then we find the formulas derived in \cite{BK01} and \cite{HKM06}), but all the details of this distribution are required in the latter.  (If the speed is even faster, then, conjecturally, only a rough mean behaviour of the potential values will influence the asymptotics.)

The paper is organised as follows. In Section~\ref{sec_assumptions}, we formulate our assumptions on the potential and on the function~$\kappa$. Then we state our results for the moment asymptotics of~$U(t)$ in Section~\ref{sec_results}. Our main result will be the identification of five phases with qualitatively different behaviour, which we will describe informally in Section~\ref{sec_phases} and rigourously in Section~\ref{sec_asymptotik} (for four of them). We will also give a proposition concerning the convergence of a~discrete variational formula to the corresponding continuous version, representing one of the main tools used in the proof of the asymptotics. In Sections~\ref{sec_proofs}--\ref{sec-Proof4}, we give sketches of the proofs of this proposition and of the theorems. The details are rather lengthy and involved; they may be found in the second author's thesis \cite{S10}.

\section{Assumptions and Preliminaries} \label{sec_assumptions}

\subsection{Model Assumptions} \label{sec_ass}
Let
$$
H(t)=\log\erwxi{\E^{t\xi(0)}},\qquad t>0,
$$
be the logarithmic moment generating function of~$\xi(0)$. We assume $H(t)<\infty$ for all~$t>0$, which is sufficient for the existence of a~nonnegative solution of~\eqref{pam} and the finiteness of all its positive moments \cite{GM90}. Now we recall the discussion on regularity assumptions in \cite[Section~1.2]{HKM06}. If we assume that $t\mapsto H(t)/t$ is in the de Haan class, then the theory of regularly varying functions provides us with an asymptotic description of~$H$ that depends only on two parameters~$\gamma$ and~$\rho$, see \cite{BGT87} and \cite[Proposition~1.1]{HKM06}. This leads to the following assumption which will be in force throughout the rest of this paper.
\begin{ass}
\label{ass_h}
There exist parameters~$\gamma\ge 0$ and~$\rho>0$ and a~continuous function $\kh\colon(0,\infty)\to(0,\infty)$, regularly varying with parameter~$\gamma$, such that, locally uniformly in $y\in[0,\infty)$,
\begin{equation}
\label{konvergenz_h}
\lim_{t\to\infty}\frac{H(ty)-yH(t)}{\kh(t)} = \rho\widehat{H}(y),
\end{equation}
where
\begin{equation}
\label{hat_h}
\widehat{H}(y)=\begin{cases}
    y \log y & \textrm{if } \gamma=1, \\[2mm]
    \dfrac{y-y^\gamma}{1-\gamma} & \textrm{if } \gamma\ne 1.
            \end{cases}
\end{equation}
\end{ass}
The scale function $\kh$ roughly describes the thickness of the potential tails at infinity. As we will see later, the function $t\mapsto\kh(t)/t$ presents a lower critical scale for the diffusion function $\kappa(t)$. The following lemma is a consequence of \cite[Theorem~3.6.6]{BGT87}.
\begin{lemma}
\label{regvar_h}
Let Assumption~\textup{\ref{ass_h}} hold.
\begin{enumerate}
\item[\textup{(a)}] If $\esssup \xi(0) \in\{0,\infty\}$, then~$H$ is regularly varying with index~$\gamma$.
\item[\textup{(b)}] If $\erwxi{\xi(0)}=0$, then~$H$ is regularly varying with index~$\gamma\vee 1$.
\end{enumerate}
\end{lemma}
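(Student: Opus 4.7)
The strategy is to extract from Assumption~\ref{ass_h} the asymptotic form of $H$ via the extended de~Haan representation theory, then read off its regular-variation index case by case.

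\textbf{Step 1 (canonical representation).} Fixing $y=\lambda>1$ in~\eqref{konvergenz_h} yields
\[
H(\lambda t)-\lambda H(t)=\rho\widehat{H}(\lambda)\,\kh(t)\bigl(1+o(1)\bigr),\qquad t\to\infty,
\]
with $\kh$ regularly varying of index~$\gamma$. This is a second-order de~Haan type condition, to which \cite[Theorem~3.6.6]{BGT87} applies; it delivers an asymptotic decomposition
\begin{equation*}
H(t)=c\,t+L(t)+o\bigl(t+L(t)\bigr),\qquad t\to\infty,
\end{equation*}
in which $c\in\rr$ and $L$ is explicit: $L$ is regularly varying of index~$\gamma$ with $L(t)\sim -\rho\,\kh(t)/(1-\gamma)$ when $\gamma\ne 1$, and regularly varying of index~$1$ with $L(t)\sim \rho\,\kh(t)\log t$ when $\gamma=1$. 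The coefficient $c$ is \emph{not} determined by~\eqref{konvergenz_h}, since any linear term cancels in the difference $H(ty)-yH(t)$.

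\textbf{Step 2 (reading off the index).} The decomposition writes $H$, up to negligible error, as the sum of two regularly varying functions of indices $1$ and $\gamma\vee 1$ respectively. Hence $H$ is itself regularly varying, of index~$\gamma$ if $c=0$ and of index~$\gamma\vee 1$ if $c\ne 0$. It therefore suffices to pin down $c$ in each of the two cases.

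\textbf{Step 3 (case analysis).} The elementary bounds $\E^{ta}\pp(\xi(0)\ge a)\le\erwxi{\E^{t\xi(0)}}\le\E^{t\esssup\xi(0)}$ imply $H(t)/t\to\esssup\xi(0)$ as $t\to\infty$. For part~(a): if $\esssup\xi(0)=0$, this limit is $0$, forcing $c=0$; if $\esssup\xi(0)=\infty$, then $H(t)/t\to\infty$ forces $L$ to dominate the linear term, which is only compatible with $\gamma\ge 1$. In both subcases the index of $H$ is~$\gamma$. For part~(b), convexity together with $H(0)=H'(0)=\erwxi{\xi(0)}=0$ yields $H\ge 0$; when $\esssup\xi(0)$ is finite (necessarily $>0$) one has $c=\esssup\xi(0)>0$, and when $\esssup\xi(0)=\infty$ the preceding argument forces $\gamma\ge 1$. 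In every subcase the index of $H$ equals $\gamma\vee 1$.

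\textbf{Main obstacle.} The only non-routine step is Step~1: recognising~\eqref{konvergenz_h} as a de~Haan-type condition directly covered by \cite[Thm.~3.6.6]{BGT87}, extracting the explicit leading function $L$, and pinning down the scaling constants $-\rho/(1-\gamma)$ and $\rho$ in the two regimes. Steps~2 and~3 then reduce to a comparison of regular-variation indices and to standard facts about convex cumulant generating functions.
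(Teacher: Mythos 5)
Your route is the same as the paper's: the paper offers no argument for this lemma beyond the citation of \cite[Theorem~3.6.6]{BGT87}, and your Steps~1--3 are the natural way to unpack that citation (divide \eqref{konvergenz_h} by $ty$ so that $f(t)=H(t)/t$ satisfies a de~Haan-type relation with auxiliary function $g(t)=\rho\kh(t)/t$, regularly varying of index $\gamma-1$; apply the trichotomy according to the sign of $\gamma-1$; identify the linear coefficient via $H(t)/t\to\esssup\xi(0)$). The case analysis in Step~3 is correct. However, two claims in Step~1 are misstated, and one of them invalidates the deduction in Step~2 as written.

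The serious one is the error term. The theorem yields $H(t)=ct+L(t)+o(\kh(t))$, i.e.\ a remainder that is small compared with $L$, not merely compared with $t+L(t)$. With your weaker form $o\bigl(t+L(t)\bigr)$, the case that part~(a) actually hinges on --- $\esssup\xi(0)=0$ with $\gamma<1$ --- collapses: there $c=0$ and $L(t)\asymp\kh(t)=o(t)$, so $o\bigl(t+L(t)\bigr)=o(t)$ swamps $L(t)$, and you can only conclude $H(t)=o(t)$, not $H\sim L$; the index $\gamma$ cannot be read off. You need the sharp form of the $\theta=\gamma-1<0$ branch: $\lim_{t\to\infty}f(t)$ exists finite and $\lim f-f(t)\sim\rho\kh(t)/\bigl((1-\gamma)t\bigr)$. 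The second, harmless, slip concerns $\gamma=1$: the $\Pi$-class representation gives $L(t)=t\int^{t}\rho\kh(s)s^{-2}\,\D s$, and this de~Haan integral of the slowly varying function $\rho\kh(s)/s$ is slowly varying but \emph{not} in general asymptotic to $\rho\bigl(\kh(t)/t\bigr)\log t$ (try $\kh(t)=t/\log t$); only the slow variation is needed, so drop the claimed asymptotics $L(t)\sim\rho\kh(t)\log t$. Finally, note that for $\gamma>1$ the theorem gives $H(t)\sim\rho\kh(t)/(\gamma-1)$ outright, with no additive constant $c$ visible, so the dichotomy ``index $\gamma$ iff $c=0$'' of Step~2 is only meaningful for $\gamma<1$; your Step~3 implicitly accounts for this, but it should be said.
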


Now we formulate some mild regularity assumptions on the speed function~$\kappa$.
\begin{ass}
\label{ass_kappa}The following limits exist:
$$
\lim_{t\to\infty} t\kappa(t)=\infty,\qquad
\lim_{t\to\infty}\frac{t\kappa(t)}{\kh(t)}\in[0,\infty],
\qquad \lim_{t\to\infty}\frac{\kappa(t)}{t^{2/d}}\in[0,\infty].
$$
\end{ass}

We also need a scale function $\alpha\colon[0,\infty)\to[0,\infty)$, which will be interpreted as the order of the radius of the relevant island. While we can define $\alpha=1$ in the results for Phases~1 and~2 of our classification, we will need the following fixed point equation in Phase~3:
\begin{equation}
\label{alpha_def}
\kh\Big(\frac{t}{\alpha_t^d}\Big)=\frac{t\kappa(t)}{\alpha_t^{d+2}}.
\end{equation}
Let us state existence and some important properties of a solution of~\eqref{alpha_def}.
\begin{lemma}
\label{alpha_asymp}
Let $\kappa(t)$ be regularly varying with index $\beta\in(\gamma-1,2/d)$. Then there exists a regularly varying function $\alpha$ such that~\eqref{alpha_def} holds for all large~$t$. Any solution $\alpha(t)=\alpha_t$
satisfies $\lim_{t\to\infty}\alpha_t=\infty$.
Furthermore, $t/\alpha_t^d\gg 1$ and $\alpha_t^{x}\ll t\kappa(t)$ for each $x<d+2$.
\end{lemma}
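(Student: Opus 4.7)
The plan is to convert \eqref{alpha_def} into a scalar inverse-function problem for a regularly varying function of negative index. I substitute $s_t := t/\alpha_t^d$; then \eqref{alpha_def} becomes
\[
 g(s_t) = \psi(t), \qquad g(s) := \frac{\kh(s)}{s^{(d+2)/d}}, \qquad \psi(t) := \frac{\kappa(t)}{t^{2/d}}.
\]
Under Assumption~\ref{ass_h} and the hypothesis that $\kappa$ is regularly varying with index $\beta$, the function $g$ is regularly varying of index $\gamma - 1 - 2/d$ and $\psi$ of index $\beta - 2/d$. The hypothesis $\gamma - 1 < \beta < 2/d$ (which in particular implies $\gamma < 1+\beta < 1+2/d$) forces both indices to be strictly negative, whence $g(s)\to 0$ as $s\to\infty$ and $\psi(t)\to 0$ as $t\to\infty$.

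Since $\kh$ is positive and continuous, so is $g$, and hence by the intermediate value theorem one obtains, for each sufficiently large $t$, a solution $s_t$ of $g(s_t)=\psi(t)$ that necessarily satisfies $s_t\to\infty$. To extract a \emph{regularly varying} solution I would invoke the asymptotic-inverse theorem for regularly varying functions of negative index (Theorem~1.5.12 in \cite{BGT87}): $g$ admits an asymptotic inverse $g^\leftarrow$, itself regularly varying of index $1/(\gamma-1-2/d)$, so that $t\mapsto g^\leftarrow(\psi(t))$ is regularly varying of index $(\beta - 2/d)/(\gamma - 1 - 2/d)$. Replacing $\kh$ by an asymptotically equivalent strictly monotone version (which is compatible with Assumption~\ref{ass_h}) turns this asymptotic inverse into a true inverse, so one may take $\alpha_t := (t/s_t)^{1/d}$ to be regularly varying with index
\[
 a = \frac{1+\beta-\gamma}{d+2-d\gamma},
\]
whose denominator is positive by the discussion above.

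The three asymptotic assertions then reduce to exponent bookkeeping. Since $\beta > \gamma - 1$, $a>0$, so $\alpha_t\to\infty$. The function $t/\alpha_t^d$ is regularly varying of index
\[
 1 - ad = \frac{2 - d\beta}{d+2-d\gamma} > 0,
\]
using $\beta < 2/d$, so $t/\alpha_t^d\to\infty$. Finally, rearranging \eqref{alpha_def} gives
\[
 \frac{\alpha_t^{x}}{t\kappa(t)} = \frac{1}{\alpha_t^{d+2-x}\,\kh(t/\alpha_t^d)},
\]
whose denominator is regularly varying of index $a(d+2-x) + \gamma(1-ad) = 1+\beta - ax$, in view of the defining identity $a(d+2-\gamma d)=1+\beta-\gamma$. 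A short computation yields $a(d+2) - (1+\beta) = \gamma(d\beta-2)/(d+2-d\gamma) \le 0$, with equality only when $\gamma=0$; hence $ax < 1+\beta$ whenever $x<d+2$, and the right-hand side vanishes.

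The main technical obstacle is producing an \emph{exactly} (not merely asymptotically) regularly varying solution of \eqref{alpha_def}: regular variation of negative index does not imply eventual monotonicity, so an IVT-selected $s_t$ need not itself be regularly varying. I would circumvent this precisely by the strict-monotone modification of $\kh$ flagged above, which is permitted by standard regular-variation theory and leaves Assumption~\ref{ass_h} intact.
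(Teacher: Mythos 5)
Your argument is correct and follows essentially the same route as the paper's proof, which simply defers to \cite{HKM06} (Proposition~1.2) and \cite{S10}: rewrite \eqref{alpha_def} as $g(t/\alpha_t^d)=\psi(t)$ with $g,\psi$ regularly varying of negative index, obtain $\alpha$ from the asymptotic inverse of $g$ (after passing to a smooth/monotone equivalent so that a genuine inverse exists), and then read off the three asymptotic claims from the indices, all of which you compute correctly (in particular $a(d+2)-(1+\beta)=\gamma(d\beta-2)/(d+2-d\gamma)\le 0$). The one loose end is that the assertion $\lim_{t\to\infty}\alpha_t=\infty$ is claimed for \emph{any} solution while your bookkeeping uses regular variation of the constructed one; this is cosmetic, since for an arbitrary solution the relation $g(s_t)=\psi(t)$ together with $g(s)=s^{\gamma-1-2/d+o(1)}$ already forces $s_t=t^{c+o(1)}$ with $c=(2-d\beta)/(d+2-d\gamma)\in(0,1)$, hence $\alpha_t^d=t/s_t\to\infty$.
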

\begin{proof}
Similar to the proof of~\cite[Proposition~1.2]{HKM06}. For details, see \cite[Lemma~2.1.5]{S10}.
\end{proof}

From the assumptions of Theorem~\ref{theo_phase1bis3}(c) below, we will see that the interval for the index of regular variation for~$\kappa$ is not a~hard restriction in Phase~3.

\subsection{Variational Formulas} \label{sec_vario}

The following variational formulas will play a role in our results. Here, $\mathrm{H}^1(\rd)$ is the Sobolev space on~$\rd$ and $\masz(\zd)$ is the space of probability measures on~$\zd$. The inner product on~$\zd$ is denoted by~$\scaltext{\cdot\,}{\cdot}$. All integrals are with respect to Lebesgue measure. We always have $\rho,\theta>0$ and $\gamma\ge 0$.
\begin{align}
\label{chi_b}
\chi^{\ssup {\rm B}}_\gamma(\rho)&=\inf_{\substack{g\in \mathrm{H}^1(\rd)\\ \norm{g}_2=1}}
        \Big\{\int_{\rd} |\nabla g|^2 + \frac{\rho}{1-\gamma}\int_{\rd} (g^{2\gamma}-g^2) \Big\},  \\
\label{chi_fb}
\chi^{\ssup {\rm {AB}}}(\rho)&=\inf_{\substack{g\in \mathrm{H}^1(\rd)\\ \norm{g}_2=1}}
        \Big\{\int_{\rd} |\nabla g|^2 - \rho\int_{\rd} g^2 \log g^2\Big\},  \\
\label{chi_de}
\chi^{\ssup{\rm{DE}}}(\rho)&=\inf_{p\in\masz(\zd)}
        \Big\{{-}\scal{\Delta\sqrt{p}}{\sqrt{p}} - \rho\scal{p}{\log p}\Big\},\\
\label{chi_db}
\chi^{\ssup{\rm {DB}}}_\gamma(\rho)&=\inf_{p\in\masz(\zd)}
        \Big\{{-}\scal{\Delta\sqrt{p}}{\sqrt{p}} + \frac{\rho}{1-\gamma}\scal{p^{\gamma}-p}{1}\Big\},\\
\label{chi_rwrs}
\chi^{\ssup{\rm{RWRS}}}_H(\theta) &= \inf_{\substack{g\in \mathrm{H}^1(\rd)\\ \norm{g}_2=1}}
        \Big\{\int_{\rd} |\nabla g|^2-\theta\int_{\rd}H\circ g^2\Big\}.
\end{align}
If $\gamma=0$, then we use the interpretation $\int_{\rd}g^{2\gamma}=|{\supp g}|$ and $\scaltext{p^{\gamma}}{1}=|{\supp p}|$. We sometimes refer to the formulas that are defined in $\rd$ (that is, $\chi_\gamma^{\ssup{\rm B}}$, $\chi^{\ssup{\rm {AB}}}$ and $\chi_H^{\ssup{\rm {RWRS}}}$) as to \lq continuous\rq\ formulas and to the others as to the \lq discrete\rq\ ones. Clearly, $\chi_\gamma^{\ssup{\rm B}}$ and $\chi^{\ssup{\rm {AB}}}$ are the continuous variants of $\chi_\gamma^{\ssup{\rm{DB}}}$ and $\chi^{\ssup{\rm{DE}}}$, respectively. Note that $\chi^{\ssup{\rm B}}_\gamma$ is degenerate in the case $\gamma>1+2/d$ (which we do not consider here).

The formulas $\chi^{\ssup{\rm{DE}}}$, $\chi^{\ssup{\rm{AB}}}$ and $\chi^{\ssup{\rm{B}}}_\gamma$ are already known from the study of the parabolic Anderson model for constant diffusion $\kappa(t)\equiv 1$ in three universality classes, see the summary in \cite{HKM06}. Our notation refers to the names of these classes introduced there: \lq DE\rq\ for \lq double-exponential\rq, \lq AB\rq\ for \lq almost bounded\rq, and \lq B\rq\ for \lq bounded\rq. Informally, the functions $g^2$ and~$p$, respectively, in the formulas have the interpretation of the shape (up to possible rescaling and vertical shifting) of those realisations of the solution $u^{\ssup t}(t,\cdot)$ that give the overwhelming contribution to the expected total mass, $\erwxi{U(t)}$. If the total mass comes from an unboundedly growing island, then a~rescaling is necessary, and a continuous formula arises, otherwise a discrete one.

In \cite{S09} the existence, uniqueness (up to shift) and some characterisations of the minimiser of $\chi^{\ssup {\rm B}}_\gamma$ are shown for~$\gamma<1$, in \cite{HKM06} it is shown that the only minimiser of $\chi^{\ssup {\rm {AB}}}$ is an explicit Gaussian function, and in \cite{GM98} and \cite{GH99}, the minimisers of $\chi^{\ssup{\rm{DE}}}(\rho)$ are analysed, which are unique (up to  shifts) for any sufficiently large $\rho$. Formula $\chi_H^{\ssup{\rm{RWRS}}}$ is a rescaling of the Legendre transform of a variational formula which appeared in the study of large deviations for the random walk in random scenery in \cite{GKS07}, see~\eqref{gks07_rwrs}. Its properties have not been analysed yet.

However, formula $\chi^{\ssup{\rm{DB}}}_\gamma$ (\lq DB\rq\ refers to \lq discrete bounded\rq) appears in the study of the parabolic Anderson model for the first time in the present paper. Here are some of its properties.
\begin{proposition}
\label{vario}
\begin{enumerate}
\item[\textup{(a)}]
For any~$\rho>0$ and any $\gamma\ne 1$ with $0\le\gamma<\max\{1+1/d,1+\rho/(2d)\}$, there exists a~minimiser for $\chi^{\ssup{\rm {DB}}}_\gamma(\rho)$.
\item[\textup{(b)}]
Let $p$ be a minimiser for $\chi^{\ssup{\rm {DB}}}_\gamma(\rho)$. Then $\supp p$ is finite if and only if $\gamma\le 1/2$. In the case~$\gamma>1/2$ the support of~$p$ is the whole lattice.
\end{enumerate}
\end{proposition}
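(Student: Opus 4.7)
I treat part (a) by the direct method of the calculus of variations, and part (b) by a variational inequality analysis at the boundary of the support. Throughout I will use that the functional
\begin{equation*}
F_\gamma(p) := -\scal{\Delta\sqrt p}{\sqrt p} + \frac{\rho}{1-\gamma}\scal{p^\gamma-p}{1}
\end{equation*}
is nonnegative on $\masz(\zd)$, since $0\le p(x)\le 1$ forces $p^\gamma-p$ to have the same sign as $1-\gamma$.

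For part (a), fix a minimising sequence $(p_n)$ and, by translation invariance of $F_\gamma$ on $\zd$, assume $p_n(0)=\max_x p_n(x)$. The crux is a uniform lower bound $p_n(0)\ge c>0$. For $0\le\gamma<1$, boundedness of $F_\gamma(p_n)$ yields $\scal{p_n^\gamma}{1}\le C$, and summing the pointwise inequality $p_n(x)^\gamma\ge p_n(0)^{\gamma-1}p_n(x)$ over $x$ gives $p_n(0)\ge C^{1/(\gamma-1)}$. For $\gamma>1$, I would first establish the strict inequality $\chi_\gamma^{\ssup{\rm {DB}}}(\rho)<\rho/(\gamma-1)$ by exhibiting trial configurations: the point mass $\1_0$ gives $F_\gamma(\1_0)=2d$, which beats $\rho/(\gamma-1)$ precisely when $\gamma<1+\rho/(2d)$, while a normalised indicator of a ball of optimal radius $R$ beats $\rho/(\gamma-1)$ for $\gamma<1+1/d$ by a short scaling computation in which the Dirichlet term $\sim 1/R$ competes with the penalty gap $\sim R^{d(1-\gamma)}$. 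This strict inequality forces $\scal{p_n^\gamma}{1}\ge c>0$, and the reverse estimate $\scal{p_n^\gamma}{1}\le p_n(0)^{\gamma-1}\scal{p_n}{1}$ then gives $p_n(0)\ge c^{1/(\gamma-1)}$. Extract a pointwise-convergent subsequence $p_n\to p$; Fatou yields $\scal{p}{1}\le 1$ together with lower semicontinuity of the Dirichlet and $p^\gamma$ terms (for $\gamma>1$ I would invoke dominated convergence with the envelope $p_n^\gamma\le p_n$). To rule out mass loss and conclude $\scal{p}{1}=1$, invoke a Lions-style concentration-compactness dichotomy: by strict concavity of $x\mapsto x^\gamma$ for $\gamma<1$ (respectively strict convexity for $\gamma>1$), splitting the mass between two distant clusters strictly worsens the penalty term, contradicting the minimality of the infimum.

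For part (b), let $p$ be a minimiser, set $f=\sqrt p$, and consider a boundary site $x\in\zd$ with $f(x)=0$ but some neighbour $y\sim x$ satisfying $f(y)>0$. Pushing $\ell^2$-mass $\epsilon$ to $x$ via $\widetilde f=(f+\epsilon\1_x)/\sqrt{1+\epsilon^2}$, a direct computation based on $-\scal{\Delta g}{g}=\sum_{\{u,v\}\text{ edge}}(g(u)-g(v))^2$ gives
\begin{equation*}
F_\gamma(\widetilde f^{\,2})-F_\gamma(f^2)
= -2\epsilon\sum_{y\sim x}f(y) + \frac{\rho}{1-\gamma}\,\epsilon^{2\gamma} + O(\epsilon^2).
\end{equation*}
For $\gamma>1/2$ the linear term dominates as $\epsilon\downarrow 0$ and is strictly negative, contradicting minimality; hence $\supp p$ has no boundary in $\zd$, so $\supp p=\zd$. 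For $\gamma<1/2$ the $\epsilon^{2\gamma}$ term dominates and is strictly positive, so adding mass at any boundary site strictly increases $F_\gamma$, which is consistent with finite support. To upgrade this consistency into actual finiteness, I would analyse the Euler--Lagrange equation
\begin{equation*}
-\Delta f + \frac{\rho\gamma}{1-\gamma}\,f^{2\gamma-1} = \mu f \quad\text{on }\supp p,
\end{equation*}
whose singular nonlinearity ($2\gamma-1<0$) forces the minimiser to have bounded support. The borderline case $\gamma=1/2$ requires finer analysis: both leading terms are of order $\epsilon$, producing the boundary condition $\sum_{y\sim x}f(y)\le\rho$, which is still compatible with finite support.

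The main obstacle is the no-mass-loss step in part (a) when $\gamma>1$, where the clean concavity argument from the $\gamma<1$ case is unavailable; this is exactly where the strict inequality $\chi_\gamma^{\ssup{\rm {DB}}}(\rho)<\rho/(\gamma-1)$, encoded by the hypothesis $\gamma<\max\{1+1/d,\,1+\rho/(2d)\}$, is indispensable for the concentration-compactness dichotomy to close.
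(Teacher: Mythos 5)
The paper gives no in-text proof of Proposition~\ref{vario}; it defers entirely to \cite[Prop.~2.1.8]{S10}, indicating only that existence follows ideas of \cite[Lemma~3.2]{GK09} and the support analysis follows \cite[p.~44]{GH99}. Your plan is exactly that strategy: direct method with translation to pin the maximum at the origin, trial configurations to quantify non-degeneracy, and a local perturbation plus Euler--Lagrange analysis for the support. The explicit computations all check out: $F_\gamma(\1_0)=2d$ beats $\rho/(\gamma-1)$ iff $\gamma<1+\rho/(2d)$; the ball gives Dirichlet cost of order $1/R$ against a gap of order $R^{d(1-\gamma)}$, which closes iff $\gamma<1+1/d$; the monotonicity of $x\mapsto x^{\gamma-1}$ gives the lower bound on $p_n(0)$ in both regimes; and the first-order expansion $-2\epsilon\sum_{y\sim x}f(y)+\frac{\rho}{1-\gamma}\epsilon^{2\gamma}+O(\epsilon^2)$ correctly separates $\gamma>1/2$ from $\gamma\le 1/2$ and settles the full-support half of (b).

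Two steps are thinner than they need to be, and you have correctly flagged both. First, in part (a) for $\gamma>1$, the parenthetical appeal to dominated convergence with the envelope $p_n^\gamma\le p_n$ does not work as stated: the dominating function varies with $n$, and the whole difficulty is that escaping mass can carry a non-vanishing contribution to $\scal{p_n^\gamma}{1}$ off to infinity, spoiling the upper semicontinuity you need for that term. The dichotomy must be excluded quantitatively; this is doable by combining the scaling $p=\lambda q$ with the fact (forced by the trial-function gap $\chi^{\ssup{\rm{DB}}}_\gamma(\rho)<\rho/(\gamma-1)$) that near-minimisers keep $\scal{q^\gamma}{1}$ bounded away from zero, yielding strict superadditivity of the infimum in the mass parameter. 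Second, in part (b) for $0<\gamma\le 1/2$ the one-site perturbation only shows that growing the support is unfavourable, which is not finiteness; the Euler--Lagrange route does close it (an infinite support forces $f\to 0$ along $\supp p$ while $|\Delta f|\le 2d$, contradicting the divergence of $f^{2\gamma-1}$ when $\gamma<1/2$, with a separate one-step argument at $\gamma=1/2$ and the trivial observation that $\scal{p^0}{1}=|\supp p|$ must be finite when $\gamma=0$), but it has to be carried out rather than gestured at. With these two steps written down, the argument is complete and consistent with the cited sources.
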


\begin{proof}
See \cite[Prop.~2.1.8]{S10}. This uses ideas from~\cite[Lemma~3.2]{GK09} for the existence and from \cite[p.~44]{GH99} for the size of the support.
\end{proof}

Similarly to the continuous analogue in \cite[Proposition~1.16]{HKM06}, it is possible to show that
$\lim_{\gamma\to 1}\chi^{\ssup{\rm {DB}}}_\gamma(\rho)=\chi^{\ssup{\rm {DE}}}(\rho)$, furthermore we have $\lim_{\rho\to\infty}\chi^{\ssup{\rm {DB}}}_\gamma(\rho)=2d$.

\section{Results} \label{sec_results}

In what follows, we will use the notation $f_t\gg g_t$ if $\lim_{t\to\infty}f_t/g_t=\infty$ and $f_t\asymp g_t$ if $\lim_{t\to\infty}f_t/g_t$ exists in $(0,\infty)$. We will always work under the assumptions made in Section~\ref{sec_ass}.

\subsection{Five Phases} \label{sec_phases}

Depending on the ratio between the speed~$\kappa(t)$ and the critical scales $\kh(t)/t$ and~$t^{2/d}$, we make out up to five phases. In the following, we resume heuristically our results for these phases. Recall the Feynman--Kac formula in~\eqref{FKform}.

\runinhead{Phase 1.}  $\kappa(t)\ll \kh(t)/t$.\\
The mass stays in the origin, where the potential takes on its highest value. The expected total mass behaves therefore like $\erwxi{U(t)}\approx\erwxi{u^{\ssup{t}}(t,0)}\approx\exp(H(t)-2d t\kappa(t))$. This includes the single-peak case of \cite{GM98}.

\runinhead{Phase 2.} $\kappa(t) \asymp \kh(t)/t$.\\
The radius of the intermittent island remains bounded in time, and consequently the moment asymptotics are given in terms of a discrete variational formula. Denoting $\kappa_*=\lim_{t\to\infty}t\kappa(t)/\kh(t)$,
\begin{equation}
\label{result}
\lim_{t\to\infty}\frac{1}{t\kappa(t)}\log \erwxi{U(t)\E^{-H(t)}}
=-\begin{cases}
\chi^{\ssup{\rm{DE}}}(\rho/\kappa_*)&\mbox{if }\gamma=1,\\
\chi^{\ssup{\rm{DB}}}_\gamma(\rho/\kappa_*)&\mbox{if }\gamma\ne1.
\end{cases}
\end{equation}
While the case $\gamma=1$ is qualitatively the same as the case of the double-exponential distribution analysed in \cite{GM98}, the case $\gamma\ne1$ shows a~new effect that was not present for constant diffusion speed $\kappa(t)\equiv1$. The diffusion is decelerated so strongly that the mass moves only by a~bounded amount.

\runinhead{Phase 3.} $\kh(t)/t \ll \kappa(t) \ll  t^{2/d}$.\\
The relation between ac-/deceleration and thickness of potential tails is so strong that the mass flows an unbounded amount of order $\alpha_t$ defined by~\eqref{alpha_def}. Since the acceleration is not too strong, the total mass comes from sites of extremely high potential values. Therefore, we get the continuous analogue to~\eqref{result}, but on scale $t\kappa(t)/\alpha_t^2$,
\begin{equation}
\label{resultcont}
\lim_{t\to\infty}\frac{\alpha_t^2}{t\kappa(t)}\log \erwxi{U(t)\exp(-\alpha_t^d H(t\alpha_t^{-d}))}
=-\begin{cases}
\chi^{\ssup{\rm{AB}}}(\rho)&\mbox{if }\gamma=1,\\
\chi^{\ssup{\rm{B}}}_\gamma(\rho)&\mbox{if }\gamma\ne1.
\end{cases}
\end{equation}
Hence, for $\gamma=1$ we are in the almost-bounded case \cite{HKM06} and for $\gamma<1$ in the bounded case \cite{BK01}. Note that we can have $\gamma\in[0,1+2/d)$ here, which has never been considered before in the parabolic Anderson model.

\runinhead{Phase 4.} $\kh(t)/t\ll \kappa(t) \asymp t^{2/d}$.\\
As in Phase 3, the mass flows an unbounded distance away from the origin. The acceleration reaches the critical level, such that this distance is of order~$t^{1/d}$, which is much larger than in Phase~3. Only so little mass reaches the sites in this large island that the potential is not extremely large here, but only by a bounded amount larger than the mean. Therefore, the characteristic variational formula does not only depend on the tails of the distribution, but on all values of the logarithmic moment generating function~$H$. This regime has strong connections to the large deviation result for a~random walk in random scenery model described in~\cite{GKS07}.

\runinhead{Phase 5.} $\kappa(t) \gg \kh(t)/t$ and $\kappa(t) \gg t^{2/d}$.\\
The speed is so high that, conjecturally, the values of the potential influence the expected total mass only via their mean, and the diffusion behaves like free Brownian motion with some diffusion constant that depends on the potential distribution. We will not present rigorous results for this phase in the present paper.

\smallskip

Note that, because of regular variation, $\kh(t)=t^{\gamma+o(1)}$. Hence, Phases~3 and~4 can only appear if we have $\gamma\le1+2/d$. The four universality classes for the constant-diffusion case $\kappa(t)\equiv 1$ are found in Phases~1--3 depending on whether $\gamma=1$ or~ $\gamma\ne 1$.

\subsection{Moment Asymptotics} \label{sec_asymptotik}

We now formulate our results. Recall the variational formulas defined in the Section~\ref{sec_vario} and set
\begin{equation*}
\chid_\gamma=\begin{cases}
        \chi^{\ssup{\rm{DE}}} & \textrm{if } \gamma=1, \\
        \chi^{\ssup{\rm{DB}}}_\gamma& \textrm{if } \gamma\ne 1,
      \end{cases}
\qquad\mbox{and}\qquad
\chic_\gamma=\begin{cases}
        \chi^{\ssup{\rm{AB}}} & \textrm{if } \gamma=1, \\
        \chi^{\ssup{\rm{B}}}_\gamma & \textrm{if } \gamma\ne 1.
      \end{cases}
\end{equation*}

Then we have the following result for the first three regimes of our model.
\begin{theorem}[Phase~1 -- Phase~3] \label{theo_phase1bis3}
Assume $\esssup \xi(0)\in\{0,\infty\}$.
\begin{enumerate}
 \item[\textup{(a)}]
If $\kappa(t)\ll \kh(t)/t$, then we have for $t\to\infty$
\begin{equation}
\label{asymp_phase1}
\erwxi{U(t)} = \exp\big(H(t)-2dt\kappa(t)(1+o(1))\big).
\end{equation}
 \item[\textup{(b)}]
If $\kappa(t)\asymp \kh(t)/t$, then
\begin{equation}
\label{asymp_phase2}
\erwxi{U(t)}=\exp\Big(H(t)-t\kappa(t)\chid_\gamma\Big(\frac{\rho}{\kappa_*}\Big)(1+o(1))\Big)
\end{equation}
with $\kappa_*=\lim_{t\to\infty}t\kappa(t)/\kh(t)\in(0,\infty)$.
\item[\textup{(c)}]
Let the assumption of Lemma~\textup{\ref{alpha_asymp}} hold, in particular $\kh(t)/t\ll \kappa(t)\ll t^{2/d}$. Furthermore suppose $\kh(t)\gg \log t$ and $\gamma<2$. Then
\begin{equation}
\label{asymp_phase3}
\erwxi{U(t)}=\exp\Big(\alpha_t^d H\Big(\frac{t}{\alpha_t^d}\Big)-\frac{t\kappa(t)}{\alpha_t^2}\chic_\gamma(\rho)(1+o(1))\Big).
\end{equation}
\end{enumerate}
\end{theorem}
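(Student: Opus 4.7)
My plan is to prove the three parts in sequence, in each case matching upper and lower bounds derived from the Feynman--Kac representation~\eqref{FKform}. On the lower-bound side the common theme is path localisation: restrict the walk $(X_s)$ to a favourable region, and average against a favourable configuration of $\xi$ there. On the upper-bound side I will follow the classical PAM strategy of spectral compactification to a box $B_t$, followed by a Legendre-type extraction of the leading exponential term ($H(t)$ or $\alpha_t^d H(t/\alpha_t^d)$) via Assumption~\ref{ass_h}.

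For part~(a), the lower bound comes directly from restricting to the event that the walk stays at the origin:
\[
\erwxi{U(t)}\ge \erwxi{\E^{t\xi(0)}}\,\pp_0^{\ssup{t}}\big(X_s=0\,\forall s\le t\big)=\exp\big(H(t)-2dt\kappa(t)\big),
\]
since the holding time at~$0$ is exponential with rate $2d\kappa(t)$. For the matching upper bound I will adapt the single-peak argument of~\cite{GM98}: because $t\kappa(t)\ll\kh(t)$, the expected number of jumps of $(X_s)$ in $[0,t]$ is far too small to pay for visiting any other high-potential site. Formally, one expands the Feynman--Kac integral over the number of jumps, factorises the $\xi$-expectations across the visited sites using independence and~\eqref{konvergenz_h}, and controls the resulting sum against the zero-jump contribution.

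For parts~(b) and~(c) I will adapt the pattern of~\cite{GM98}, \cite{BK01} and~\cite{HKM06} to the time-dependent diffusion~$\kappa(t)$. The upper bound begins with a \emph{spectral compactification}: an a~priori estimate on the probability of leaving a box~$B_t$ of appropriate side length (bounded in~(b), of macroscopic order~$\alpha_t$ in~(c)) reduces $\erwxi{U(t)}$, up to a subdominant factor, to a finite-box Rayleigh-type problem for $\kappa(t)\Delta+\xi$. Applying Assumption~\ref{ass_h} to the joint moment generating function of the $\xi$-values in~$B_t$ and factoring out the dominant deterministic pre-factor $e^{H(t)}$ (resp.\ $\exp(\alpha_t^d H(t/\alpha_t^d))$) converts the remaining problem into the minimisation of a Dirichlet energy plus an entropy term built from~$\widehat{H}$. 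In part~(b) this yields $\chi^{\ssup{\rm DE}}(\rho/\kappa_*)$ or $\chi^{\ssup{\rm DB}}_\gamma(\rho/\kappa_*)$ directly; in part~(c), an additional rescaling $x\mapsto x/\alpha_t$ and a discrete-to-continuous passage deliver $\chi^{\ssup{\rm AB}}(\rho)$ or $\chi^{\ssup{\rm B}}_\gamma(\rho)$. Matching lower bounds come by reversing the compactification: for every near-optimiser $p$ (or $g$ after rescaling) one exhibits a random-environment event on which $\xi$ approximates the target profile and a path event forcing $X$ to produce the matching empirical occupation density.

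The most delicate step, and the main novelty flagged in the abstract, is the discrete-to-continuous variational convergence needed in part~(c). Concretely this is a two-sided $\Gamma$-convergence for the functionals underlying $\chi^{\ssup{\rm DB}}_\gamma$ and $\chi^{\ssup{\rm DE}}$ under the rescaling $p_t\mapsto g_t:=\alpha_t^{d/2}\sqrt{p_t(\lfloor\alpha_t\,\cdot\rfloor)}$: one must show that any family $(g_t)$ with bounded discrete Dirichlet energy is $\mathrm{H}^1(\rd)$-precompact with limits $g$ satisfying the correct $\liminf$ inequality relative to $\int|\nabla g|^2$, and conversely that every $g\in\mathrm{H}^1(\rd)$ with $\norm{g}_2=1$ is attained by piecewise-constant interpolations; the entropy term $\scal{p^\gamma-p}{1}$ passes to the limit by standard $L^1$ arguments once support control is available. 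Two further hurdles I anticipate are a uniform control of the remainder in~\eqref{konvergenz_h} inside the compactification box, needed to linearise $H(t(1+\varepsilon p))-(1+\varepsilon p)H(t)$ for non-constant~$p$, and, in part~(a) when $\esssup\xi(0)=\infty$, ruling out visits to far-away high peaks by combining large-deviation estimates for the walk with classical extreme-value bounds for~$\xi$.
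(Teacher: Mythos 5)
Your architecture matches the paper's at every major junction: Feynman--Kac, lower bounds by localising the walk, upper bounds by compactification plus a variational reduction, and --- the decisive point in part (c) --- a discrete-to-continuous convergence of the variational formulas (the paper's Proposition~\ref{gammakonvergenz}). The route differences are minor: for (a) and (b) the paper works with the large deviation principle for the normalised local times $\ell_t/t$ at scale $t\kappa(t)$ and compares it with the scale $\kh(t)$ of the potential term (degenerate Varadhan in (a), genuine Varadhan in (b)), rather than your jump expansion and Rayleigh-type eigenvalue problem; and for the upper bound in (c) the paper replaces the spectral argument by the explicit joint density of local times from \cite{BHK07}, which yields a finite-$t$ discrete variational formula $\chi_t(\tilde\rho)$ directly. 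These choices are interchangeable in principle.

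The genuine gap is in your treatment of the discrete-to-continuous passage. You assert that a family $(g_t)$ with bounded discrete Dirichlet energy is $\mathrm{H}^1(\rd)$-precompact with an $L^2$-normalised limit satisfying the $\liminf$ inequality, and that the term $\scal{p^\gamma-p}{1}$ ``passes to the limit by standard $L^1$ arguments once support control is available''. Neither claim survives scrutiny. A sequence with $\norm{g_t}_2=1$ and bounded Dirichlet energy can lose all of its mass at infinity (the problem is translation invariant), so the limit need not be normalised; this is exactly the obstruction the paper flags in the approach of \cite{HKM06} (the $L^2$-normalisation of the limit function requires an unjustified interchange of limit and integral) and circumvents by never passing to a limit function at all: it periodises the minimising measures onto a torus of side $\asymp Ra_n$, builds piecewise-linear finite-element interpolants, and uses a Donsker--Varadhan shift lemma to impose zero boundary values with controlled loss, thereby producing admissible competitors for the continuous problem directly. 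Moreover, for $\gamma>1$ the term $\frac{\rho}{1-\gamma}\sum_z p(z)^\gamma$ is negative and could a priori drive the functional to $-\infty$ along minimising sequences; excluding this requires the discrete Sobolev inequality of Lemma~\ref{sobolev_discrete}, which has no counterpart in your sketch. Finally, the hypotheses $\kh(t)\gg\log t$ and $\gamma<2$ are not decorative: they are needed to absorb the combinatorial prefactor $(2dt\kappa(t))^{|B_{R\alpha_t}|}$ from the local-times density and the contribution of the event $\{L_t>M\}$, and your plan does not indicate where these error terms are controlled.
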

Note that the assumption $\esssup \xi(0)\in\{0,\infty\}$ is not restrictive, since a shift of the potential would only lead to an additive constant in our results. The assumptions $\kh(t)\gg \log t$ and $\gamma<2$ in part~(c) of the theorem are purely technical, the first one only needed in the case~$\gamma=0$. Since~$\gamma<1+2/d$ in the respective phase (which follows from the assumption of Lemma~\textup{\ref{alpha_asymp}}), $\gamma<2$ is only a~restriction in dimension~$1$.

Now we come to Phase~4, where we will meet the variational formula $\chi^{\ssup{\rm{RWRS}}}_H(\theta)$ defined in~\eqref{chi_rwrs}. Since the result will no longer depend on the upper tails of the potential distribution, it will make sense to have an assumption for the expectation of~$\xi(0)$ instead of its essential supremum. Again, this is no loss of generality.
\begin{theorem}[Phase~4]
\label{theo_phase4}
Assume $\erwxi{\xi(0)}=0$ and $\kh(t)/t\ll\kappa(t)\asymp t^{2/d}$. Let $\gamma\in[0,1+2/d)$, $\gamma<2$. Then we have for $t\to\infty$
\begin{equation}
\label{asymp_phase4}
\erwxi{U(t)}=\exp\Big({-}t\kappa^*\chi^{\ssup{\rm{RWRS}}}_H\Big(\frac{1}{\kappa^*}\Big)(1+o(1))\Big)
\end{equation}
with $\kappa^*=\lim_{t\to\infty}\kappa(t)/t^{2/d}\in(0,\infty)$.
\end{theorem}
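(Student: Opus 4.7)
The Feynman--Kac formula~\eqref{FKform} combined with the independence of~$(\xi(z))_z$ gives, via Fubini,
\begin{equation*}
\erwxi{U(t)} = \ee_0^{\ssup t}\Big[\exp\Big\{\sum_{z\in\zd} H(\ell_t(z))\Big\}\Big], \qquad \ell_t(z)=\int_0^t \1\{X_s=z\}\,\D s.
\end{equation*}
With $\kappa(t)\sim\kappa^* t^{2/d}$, the critical spatial scale is $\alpha_t=t^{1/d}$: it is precisely the scale on which $t/\alpha_t^d=1$, so the local times $\ell_t(z)$ of the tilted walk are of order one and the Donsker--Varadhan confinement cost $t\kappa(t)/\alpha_t^2\sim\kappa^* t$ matches the potential reward in magnitude. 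The plan is to rescale space by~$\alpha_t$ and identify the logarithmic asymptotics via an LDP together with Varadhan's lemma.

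Concretely, I would introduce the probability density $L_t(x) = (\alpha_t^d/t)\,\ell_t(\lfloor x\alpha_t\rfloor)$ on~$\rd$. Since $t/\alpha_t^d=1$ one has $\ell_t(z)=L_t(z/\alpha_t)$, and a Riemann-sum approximation yields
\begin{equation*}
\sum_{z\in\zd} H(\ell_t(z)) = \sum_{z\in\zd} H\big(L_t(z/\alpha_t)\big) \approx \alpha_t^d \int_{\rd} H\circ L_t = t\int_{\rd} H\circ L_t.
\end{equation*}
Expressing the discrete Dirichlet form of~$\sqrt{\ell_t/t}$ in rescaled coordinates produces a factor $\alpha_t^{-2}\int|\nabla\sqrt{L_t}|^2$, so the Donsker--Varadhan rate of the walk (with generator $2d\kappa(t)\Delta$) becomes, after rescaling, proportional to $\kappa(t)\alpha_t^{-2}\int|\nabla\sqrt{L_t}|^2\sim\kappa^*\int|\nabla\sqrt{L_t}|^2$. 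This yields an LDP at speed~$t$ for~$L_t$ with rate function proportional to $\kappa^*\int|\nabla g|^2$ on $\{g\in\mathrm{H}^1(\rd):\norm{g}_2=1\}$, and combining it with the approximation above and Varadhan's lemma delivers
\begin{equation*}
\lim_{t\to\infty}\frac{1}{t}\log\erwxi{U(t)} = \sup_g\Big\{\int H\circ g^2-\kappa^*\int|\nabla g|^2\Big\} = -\kappa^*\chi^{\ssup{\rm{RWRS}}}_H(1/\kappa^*),
\end{equation*}
as claimed. This is precisely the reason the paper remarks on the connection to the annealed random walk in random scenery LDP from~\cite{GKS07}, and I would import or adapt their result rather than reprove the LDP from scratch.

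The main obstacles are threefold. First, one must make rigorous sense of the functional $g\mapsto\int H\circ g^2$ and prove enough semicontinuity (in the topology in which the LDP holds) to apply Varadhan's lemma. This relies crucially on $\gamma<2$: by Lemma~\ref{regvar_h}(b), $H$~is regularly varying with index $\gamma\vee 1<2$, so $H(y)=o(y^2)$ at infinity, and the Sobolev embedding $\mathrm{H}^1(\rd)\hookrightarrow L^p(\rd)$ for the relevant exponent~$p$ controls $\int H\circ g^2$ by $\int|\nabla g|^2$ and $\int g^2$. Second, because of the non-compactness of~$\rd$, the matching upper bound requires truncation: restrict the walk to a box of radius $R\alpha_t$, bound the probability of escape via Donsker--Varadhan exit-time estimates, and let $R\to\infty$ at the end; the hypothesis~$\kh(t)/t\ll\kappa(t)$ ensures that we are above the lower critical scale, so escape costs dominate any gain near infinity. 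Third, for the lower bound I would pick a smooth, compactly supported near-minimiser~$g$ of $\chi^{\ssup{\rm{RWRS}}}_H(1/\kappa^*)$ and force the rescaled local-time density of the walk to stay in an $\varepsilon$-tube around~$g^2$; on this event the Riemann-sum step becomes an equality up to~$o(t)$ and the tube probability is lower-bounded by the LDP. The delicate point throughout is the interplay between the a priori unbounded values of~$H$ and the non-compactness of~$\rd$, which is the technical cost of leaving the extreme-value regime of Phase~3 and entering the averaged regime of Phase~4.
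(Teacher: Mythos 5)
Your reduction $\erwxi{U(t)}=\ee_0^{\ssup t}\big[\exp\{\sum_{z}H(\ell_t(z))\}\big]$, the identification of the scale $\alpha_t=t^{1/d}$ and of the speed $\kappa^*t$, and the final variational identity are all correct. Your lower bound (tube around a smooth, compactly supported near-minimiser, the exact identity $\sum_z H(\ell_t(z))=t\int H\circ L_t$, and weak lower semicontinuity of the convex functional $L\mapsto\int H\circ L$, which holds since $H\ge 0$ is convex) is a viable route that genuinely differs from the paper's: there, the lower bound is obtained by writing $\erwxi{U(t)}=\int_\rr t\E^{ut}\,\pp_0^{\ssup t}\times\probxi\big(\scaltext{L_t}{\bar\xi_t}>u\big)\,\D u$ and taking a Legendre transform of the random-walk-in-random-scenery LDP of \cite{GKS07}. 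Your version is more self-contained; the paper's outsources the analysis to an existing theorem.

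The genuine gap is in the upper bound. The LDP for $L_t$ holds in the weak topology generated by test integrals against continuous functions, and in that topology $L\mapsto\int H\circ L$ is lower semicontinuous but \emph{not} upper semicontinuous: weakly convergent step functions can pile up large values on small sets, and since $H$ grows at least linearly (superlinearly for $\gamma\ge 1$) this can only increase the functional in the limit. Hence Varadhan's upper bound cannot be applied to $\int H\circ L_t$ as it stands, and the Sobolev embedding you invoke controls the \emph{size} of $\int H\circ g^2$, not its continuity. The paper's remedy is to mollify before integrating out the potential: the scenery is first truncated at level $M$ and convolved with $j_\delta$, so that Varadhan is applied to the weakly continuous functional $L\mapsto\int H(L\star j_\delta)$, and Jensen's inequality removes the convolution only inside the deterministic variational formula at the end. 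This forces one to estimate separately the remainder $\ee_{0,R}^{\ssup t}\big[\exp\big(t\scaltext{L_t}{\bar\xi_t^{\ssup{>M}}}\big)\big]$, and that cutting estimate (following \cite{GK09}) is precisely where $\gamma<2$ and Lemma~\ref{regvar_h}(b) are actually consumed; it is the technical heart of the proof and is absent from your outline. Your compactification via exit-time estimates also differs from the paper's eigenvalue-expansion argument borrowed from \cite{BK01}, but that is a minor point.
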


\subsection{Variational Convergence} \label{sec_gammakonv}

We now state a result which is both important in the proof of Theorem~\ref{theo_phase1bis3}(c) and of independent interest as a~connection between the discrete variational formula~$\chid_\gamma(\rho)$ and its continuous analogue~$\chic_\gamma(\rho)$. In the case~$\gamma=1$, this fact is stated in \cite{HKM06} and is derived without difficulties from an explicit representation of $\chi^{\ssup{\rm{AB}}}(\rho)$. The proof for the case~$\gamma\ne 1$ is much more involved and uses techniques from the theory of finite elements.
\begin{proposition}
\label{gammakonvergenz}
Let~$\rho>0$. As~$\kappa\to\infty$, we have
\begin{equation}
\label{gammakonvergenz_de}
\kappa\chi^{\ssup{\rm{DE}}}\Big(\frac{\rho}{\kappa}\Big)=\chi^{\ssup{\rm{AB}}}(\rho)+\rho\frac{d}{2}\log\kappa + o(1)
\end{equation}
and for $\gamma\in [0,1+2/d)\setminus\{1\}$
\begin{equation}
\label{gammakonvergenz_db}
\kappa^{1-d\nu}\chi^{\ssup{\rm{DB}}}_\gamma\Big(\frac{\rho}{\kappa}\Big)
=\chi^{\ssup{\rm B}}_\gamma(\rho)+\rho\frac{1-\kappa^{-d\nu}}{1-\gamma}+o(1)
\end{equation}
with $\nu=\frac{1-\gamma}{2+d(1-\gamma)}$.
\end{proposition}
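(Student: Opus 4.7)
\emph{Plan.} I focus on \eqref{gammakonvergenz_db}; the case $\gamma=1$ in \eqref{gammakonvergenz_de} is essentially the content of \cite{HKM06}, since $\chi^{\ssup{\rm{AB}}}(\rho)$ has an explicit Gaussian minimiser which one inserts as a discretised Gaussian test measure into $\chi^{\ssup{\rm{DE}}}(\rho/\kappa)$ and expands. The idea for \eqref{gammakonvergenz_db} is a matching upper/lower bound based on the spatial rescaling $\alpha=\kappa^{\beta}$ with $\beta=\frac{1}{2+d(1-\gamma)}$, which satisfies $2\beta=1-d\nu$. Heuristically, the would-be minimiser of $\chi^{\ssup{\rm{DB}}}_\gamma(\rho/\kappa)$ should be of the form $p_\kappa(z)\approx\alpha^{-d}g(z/\alpha)^2$ for some $g\in\mathrm{H}^1(\rd)$ with $\|g\|_2=1$: then $-\scaltext{\Delta\sqrt{p_\kappa}}{\sqrt{p_\kappa}}\approx\alpha^{-2}\int|\nabla g|^2$ and $\scaltext{p_\kappa^\gamma}{1}\approx\alpha^{d(1-\gamma)}\int g^{2\gamma}$, so $\alpha=\kappa^\beta$ puts both parts of the functional on the common scale $\kappa^{-2\beta}$. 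The additive correction $\rho(1-\kappa^{-d\nu})/(1-\gamma)$ is precisely the contribution of the ``$-p$'' piece of $\scaltext{p^\gamma-p}{1}$ after rescaling.

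\emph{Upper bound.} Given $\varepsilon>0$ pick $g\in C_c^\infty(\rd)$ with $\|g\|_2=1$ realising $\chi^{\ssup{\rm{B}}}_\gamma(\rho)$ to within $\varepsilon$, and set $p_\alpha(z)=c_\alpha\alpha^{-d}g(z/\alpha)^2$ with $c_\alpha\to 1$ chosen so that $\scaltext{p_\alpha}{1}=1$. Riemann-sum estimates for $g^2$ and $g^{2\gamma}$, together with the neighbour bound $(\sqrt{p_\alpha(z)}-\sqrt{p_\alpha(z')})^2=\alpha^{-d-2}(|\nabla g(z/\alpha)|^2+o(1))$ coming from a first-order Taylor expansion of $g$ at scale $\alpha^{-1}$, substituted into the definition of $\chi^{\ssup{\rm{DB}}}_\gamma(\rho/\kappa)$ and multiplied by $\kappa^{1-d\nu}=\alpha^2$, yield
\begin{equation*}
\kappa^{1-d\nu}\chi^{\ssup{\rm{DB}}}_\gamma(\rho/\kappa)\le\chi^{\ssup{\rm{B}}}_\gamma(\rho)+\varepsilon+\rho\,\frac{1-\kappa^{-d\nu}}{1-\gamma}+o(1).
\end{equation*}

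\emph{Lower bound and main obstacle.} Let $p_\kappa$ be a near-minimiser of $\chi^{\ssup{\rm{DB}}}_\gamma(\rho/\kappa)$. Rescale the lattice to $\alpha^{-1}\zd$, triangulate each unit cube of $\alpha^{-1}\zd$ into $d!$ simplices via a Kuhn decomposition, and let $g_\kappa\in\mathrm{H}^1(\rd)$ be the unique piecewise-affine function taking the value $\alpha^{d/2}\sqrt{p_\kappa(z)}$ at each vertex $z/\alpha$. Standard finite-element energy estimates yield
\begin{equation*}
\int_{\rd}|\nabla g_\kappa|^2 \le \alpha^2\bigl(-\scaltext{\Delta\sqrt{p_\kappa}}{\sqrt{p_\kappa}}\bigr)(1+o(1))\quad\text{and}\quad \|g_\kappa\|_2^2=\scaltext{p_\kappa}{1}+o(1)=1+o(1),
\end{equation*}
so $g_\kappa$ is admissible for $\chi^{\ssup{\rm{B}}}_\gamma(\rho)$ after normalisation. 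The main obstacle is the nonlinear term: on each simplex one must compare $\int g_\kappa^{2\gamma}$ with the vertex-averaged $p_\kappa^\gamma$ via Jensen's inequality on $x\mapsto x^\gamma$, which points in the favourable direction for $\gamma<1$ (concavity) but in the unfavourable one for $\gamma>1$ (convexity). For $\gamma>1$ the construction must be refined, e.g.\ by mollifying $g_\kappa$ at scale $o(\alpha^{-1})$ and using a two-scale Taylor expansion of $x^\gamma$ at the vertex values to absorb the higher-order term; this is where the finite-element machinery is essential. A secondary point is a priori tightness of $p_\kappa$ at scale $\alpha$, which follows from the just-proved upper bound together with the coercivity of the functional defining $\chi^{\ssup{\rm{B}}}_\gamma(\rho)$ on $\mathrm{H}^1(\rd)$ (Proposition~\ref{vario}(a) provides a genuine minimiser in the relevant range of $\gamma$); without such tightness the rescaled mass could escape to infinity and $g_\kappa$ would vanish in $\mathrm{H}^1$. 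Once the nonlinear comparison and tightness are in place, substituting $g_\kappa$ into the defining infimum of $\chi^{\ssup{\rm{B}}}_\gamma(\rho)$ closes the lower bound with the same constant and the same correction term as on the upper side.
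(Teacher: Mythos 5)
Your strategy coincides with the paper's: the upper bound by discretising a smooth near-minimiser of $\chi^{\ssup{\rm B}}_\gamma(\rho)$ at scale $\varepsilon=\kappa^{-1/(2+d(1-\gamma))}$, and the lower bound by a piecewise-affine Kuhn/finite-element interpolation of a discrete near-minimiser; the scaling relation $2\beta=1-d\nu$ and the origin of the additive correction are identified correctly, and the upper bound sketch is fine. In the lower bound, however, there are genuine gaps. First, every $o(1)$ in your interpolation estimates — in particular $\|g_\kappa\|_2^2=1+o(1)$, which rests on $\|g_\kappa-h_\kappa\|_2^2\le d\,\alpha^{-2}\int|\nabla g_\kappa|^2$ with $h_\kappa$ the step function — presupposes the a priori bound $\sup_\kappa\alpha^2S(p_\kappa)<\infty$, where $S(p)=-\scal{\Delta\sqrt{p}}{\sqrt{p}}$. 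Your appeal to ``coercivity'' does not deliver this: the functional is translation invariant, and for $\gamma>1$ the term $\frac{\rho}{1-\gamma}\scal{p^\gamma}{1}$ is \emph{negative}, so even boundedness from below is not obvious. The paper obtains the bound from a discrete Sobolev inequality $\sum_z p(z)^\gamma\le c\,S(p)^{d(\gamma-1)/2}$, whose exponent is $<1$ precisely because $\gamma<1+2/d$; hence along any sequence with $\alpha^2S(p_\kappa)\to\infty$ the rescaled functional diverges to $+\infty$ and such sequences may be discarded. You need this lemma (or a substitute) before any of your asymptotic identities make sense.

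Second, the treatment of the nonlinear term for $\gamma>1$ is not workable as stated: mollifying $g_\kappa$ at scale $o(\alpha^{-1})$ would destroy the exact gradient identity $\int|\nabla g_\kappa|^2=\alpha^2S(p_\kappa)$ on which the argument depends. The paper's actual device is elementary: the triangle inequality in $\mathrm{L}^{2\gamma}$, $\|h_\kappa\|_{2\gamma}\le\|g_\kappa\|_{2\gamma}+\|g_\kappa-h_\kappa\|_{2\gamma}$, combined with the pointwise bound $|\nabla g_\kappa|^2\le d\,\alpha^{d+2}$ (from $p\le1$) to convert $\int|\nabla g_\kappa|^{2\gamma}$ into $\int|\nabla g_\kappa|^2$ times an explicit power of $\alpha$, yielding an error $O(\alpha^{-(2+d(1-\gamma))/(2\gamma)})$. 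Your Jensen heuristic is also misaligned: since $g_\kappa$ interpolates the values of $\sqrt{p_\kappa}$ affinely, the relevant convexity is that of $x\mapsto x^{2\gamma}$, so the inequality points the favourable way only for $\gamma\in[1/2,1)$, not for all $\gamma<1$. Finally, you omit the step the authors single out as the reason the argument of \cite{HKM06} could not simply be quoted, namely keeping the candidate normalised and admissible without an unjustified interchange of limit and integral: the paper first periodises $p_\kappa$ onto a box $B_{R\alpha}$ (which decreases both terms of the functional) and then multiplies $g_\kappa$ by a cut-off $\Psi_R$, invoking the Donsker--Varadhan shift lemma to ensure that only an $O(R^{-1/2})$ fraction of $\int(g_\kappa^2+g_\kappa^{2\gamma})$ sits near the boundary, so that the normalised $g_\kappa\Psi_R/\|g_\kappa\Psi_R\|_2$ changes the value of the functional by $O(R^{-1/2})$ uniformly in $\kappa$. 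Some such compactification and boundary control must be supplied before the substitution into $\chi^{\ssup{\rm B}}_\gamma(\rho)$ ``closes the lower bound''.
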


Note that \eqref{gammakonvergenz_de} and \eqref{gammakonvergenz_db} are consistent, as \eqref{gammakonvergenz_de} is a continuous continuation of \eqref{gammakonvergenz_db} to $\gamma=1$. Proposition~\ref{gammakonvergenz} shows that Phases 2 and 3 can be continuously transformed into each other, i.e., the transition between them is actually no phase transition in the sense of statistical mechanics.

\section{Proof of Variational Convergence (Proposition~\textup{\ref{gammakonvergenz}})}\label{sec_proofs}

The asymptotics \eqref{gammakonvergenz_de} follows from the arguments in \cite[p.~313]{HKM06}. To show~\eqref{gammakonvergenz_db}, we remark first that the summand $\frac{\rho}{1-\gamma}$ drops out in both \eqref{chi_b} and~\eqref{chi_db}. Therefore \eqref{gammakonvergenz_db} is equivalent to
\begin{equation}
\label{gammakonvergenz_db_2}
\lim_{\kappa\to\infty}\kappa^{1-d\nu}\inf_{p\in\masz(\zd)}
        \Big\{{-}\scal{\Delta\sqrt{p}}{\sqrt{p}} + \frac{\rho}{\kappa(1-\gamma)}\sum_{z\in\zd}p(z)^{\gamma}\Big\}
=\hat{\chi}_\gamma(\rho),
\end{equation}
where
$$
\hat{\chi}_\gamma(\rho)=\inf_{\substack{g\in \mathrm{H}^1(\rd)\\ \norm{g}_2=1}}
        \Big\{\int_{\rd} |\nabla g|^2 + \frac{\rho}{1-\gamma}\int_{\rd} g^{2\gamma} \Big\}.
$$
The proof of the upper bound of \eqref{gammakonvergenz_db_2} is standard and we will here only give the idea. To an approximate minimiser~$g$ for the infimum in~$\hat{\chi}_\gamma(\rho)$ and for small $\varepsilon>0$, we define a probability measure~$p_\varepsilon$ by
$$
p_\varepsilon(z)=\int_{\varepsilon z+[0,\varepsilon)^d} g(x)^2 \,\D x,\quad z\in\zd.
$$
Assuming that~$g$ is smooth and compactly supported, we can make use of Taylor expansions to see that, as $\varepsilon\downarrow 0$,
$$
-\varepsilon^{-2}\scal{\Delta\sqrt{p_\varepsilon}}{\sqrt{p_\varepsilon}} \to \int_{\rd}|\nabla g|^2
\quad\mbox{and}\quad \varepsilon^{d(1-\gamma)}\sum_{z\in \zd}p_\varepsilon(z)^\gamma\to \int_{\rd}g^{2\gamma}.
$$
Recall $\gamma<1+2/d$. Putting $\varepsilon=\kappa^{-(1-d\nu)/2}=\kappa^{-1/(2+d(1-\gamma))}\downarrow 0$ as $\kappa\to\infty$, this shows the upper bound.

Let us now turn to the lower bound. This proof is pretty involved and comes in several steps. The principal idea and main arguments are taken from \cite[Proof of~(5.3)]{HKM06}. However, we could not find an argument for the $\mathrm{L}^2$-normalisation of the limit function in their approximation approach, since this involves interchanging integral and limit, which seems to be hard to justify. Hence, we use a different construction. Furthermore, our consideration of $\gamma>1$ causes some additional difficulties.

We will only treat the case $\gamma>1$. The structure for $\gamma<1$ is similar, for details we refer to the proofs of \cite[Prop.~3.4.7 and Prop.~5.2.1]{S10}. We denote $S(p)=-\scal{\Delta\sqrt{p}}{\sqrt{p}}$.

\runinhead{Step 1.}  We choose minimising sequences $\kappa_n\to\infty$ and $(p_n)_n$ from $\masz(\zd)$ for the left hand side of~\eqref{gammakonvergenz_db_2}. Put $a_n=\kappa_n^{(1-d\nu)/2}$. We now argue that we can assume, without loss of generality, that
\begin{equation}
\label{finite}
\sup_{n\in\nn}a_n^2S(p_n)<\infty.
\end{equation}
For this, we need the following discrete Sobolev inequality:
\begin{lemma}
\label{sobolev_discrete}
Let $\gamma>1$ with $\gamma(d-2)<d$. There exists a constant $c=c_{d,\gamma}$ such that for all $p\in\masz(\zd)$
$$
\sum_{z\in\zd}p(z)^\gamma\le c S(p)^{d(\gamma-1)/2}.
$$
\end{lemma}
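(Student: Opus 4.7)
The plan is to reformulate the inequality in terms of $f=\sqrt{p}$. Then $\|f\|_2=1$ since $p$ is a probability measure, and summation by parts gives
$$
S(p)=-\scal{\Delta f}{f}=\tfrac12\sum_{|x-y|=1}(f(x)-f(y))^2=\|\nabla f\|_2^2,
$$
where $\nabla$ denotes the discrete gradient. The claim becomes the discrete Gagliardo--Nirenberg inequality
$$
\|f\|_{2\gamma}^{2\gamma}\le c\,\|\nabla f\|_2^{d(\gamma-1)}\qquad\text{whenever }\|f\|_2=1.
$$

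First I would establish (or invoke) the discrete Sobolev inequality $\|f\|_{2^*}\le C_d\|\nabla f\|_2$ on $\zd$ for $d\ge 3$, with $2^*=2d/(d-2)$. This can be obtained from the continuous version by identifying $f\colon\zd\to\rr$ with the piecewise-constant function $\widetilde f(x)=f(\lfloor x\rfloor)$ on $\rd$ (whose $L^q$ norm agrees with the $\ell^q$ norm of $f$) and a short mollification on scale one, whose $H^1$-norm is comparable to $\|\nabla f\|_2$; alternatively one cites a standard reference. In dimensions $d=1,2$, where the endpoint fails, one instead takes any discrete Gagliardo--Nirenberg inequality of the form $\|f\|_q\le C\|\nabla f\|_2^\theta\|f\|_2^{1-\theta}$ directly, which is also classical on $\zd$.

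Then I would interpolate by H\"older between $L^2$ and $L^{2^*}$. Writing $\frac{1}{2\gamma}=\frac{1-\theta}{2}+\frac{\theta}{2^*}$ yields
$$
\theta=\frac{d(\gamma-1)}{2\gamma},\qquad 2\gamma\theta=d(\gamma-1),
$$
and the constraint $0\le\theta\le 1$ is exactly $1\le\gamma\le d/(d-2)$, which is guaranteed by the hypotheses $\gamma>1$ and $\gamma(d-2)<d$. Using $\|f\|_2=1$, H\"older gives
$$
\|f\|_{2\gamma}\le\|f\|_2^{1-\theta}\|f\|_{2^*}^{\theta}\le C_d^{\theta}\|\nabla f\|_2^{\theta},
$$
and raising to the power $2\gamma$ yields $\sum_z p(z)^\gamma=\|f\|_{2\gamma}^{2\gamma}\le C_d^{2\gamma\theta}\|\nabla f\|_2^{d(\gamma-1)}=c\,S(p)^{d(\gamma-1)/2}$, as desired.

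The only nontrivial point is the transfer of the continuous Sobolev embedding to $\zd$; the rest is purely algebraic interpolation. Once that ingredient is accepted (or cited), the sharp form of the exponent $d(\gamma-1)/2$ falls out automatically from the scaling identity above, and the two restrictions in the hypotheses of the lemma appear naturally as the range of admissible interpolation exponents.
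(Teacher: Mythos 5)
Your reduction is correct: with $f=\sqrt{p}$ one has $S(p)=\frac12\sum_{|x-y|=1}(f(x)-f(y))^2=\|\nabla f\|_2^2$ by summation by parts, the H\"older exponent $\theta=d(\gamma-1)/(2\gamma)$ is the right one, and the admissibility constraint $\theta\le 1$ is exactly $\gamma(d-2)\le d$; the paper itself only refers to [S10, Lemma~3.2.10] here, and the presence of [CGH01] (discrete Sobolev inequalities) in its bibliography indicates that the intended proof is the same discrete-Sobolev-plus-interpolation argument you describe. The one step that needs genuine care is the transfer of the continuous embedding to $\zd$: the piecewise-constant extension of $f$ is not in $\mathrm{H}^1(\rd)$, so after mollifying (or taking a piecewise multilinear interpolation as in Step~3 of Section~4 of the paper) you must bound $\|f\|_{\ell^{2^*}}$ from \emph{above} by the $L^{2^*}$-norm of the smoothed function plus an error controlled by $\|\nabla f\|_{\ell^2}$ (using $\ell^2\subset\ell^{2^*}$), since mollification can only be compared to the lattice norm in the easy direction; citing a ready-made discrete Sobolev inequality such as [CGH01] sidesteps this. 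Your treatment of $d\le 2$ is also right: there $2^*$ is unavailable but the hypothesis $\gamma(d-2)<d$ is vacuous, and one uses discrete Gagliardo--Nirenberg/Ladyzhenskaya inequalities directly (in $d=1$, simply $\|f\|_\infty^2\le 2\|f\|_2\|\nabla f\|_2$ followed by $\sum_z f(z)^{2\gamma}\le \|f\|_\infty^{2(\gamma-1)}\|f\|_2^2$).
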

\begin{proof}
See \cite[Lemma~3.2.10]{S10}.
\end{proof}
Now suppose that~\eqref{finite} does not hold. Then, by Lemma~\ref{sobolev_discrete} and because of $d(\gamma-1)/2<1$,
\begin{multline*}
\lim_{n\to\infty}a_n^2\bigg\{S(p_n)+\frac{\rho}{a_n^{2+d(1-\gamma)}(1-\gamma)}\sum_{z\in\zd}p_n(z)^\gamma\bigg\}\\
\ge\limsup_{n\to\infty}\Big\{a_n^2S(p_n)-\frac{c\rho}{\gamma-1}\big(a_n^2S(p_n)\big)^{d(\gamma-1)/2}\Big\}
=\infty.
\end{multline*}
Since $(p_n)_n$ is a minimising sequence, the lower bound would now be trivially satisfied. Hence, we can assume~\eqref{finite}.

\runinhead{Step 2.} We compactify on a box $B_{Ra_n}=[-Ra_n,Ra_n]^d\cap\zd$ for $R>0$. Consider the periodised probability measures
$$
p_n^R(z)=\sum_{k\in(2Ra_n+1)\zd}p_n(z+k),\quad z\in B_{Ra_n}.
$$
In \cite[Lemma~1.10]{GM98}, it was shown that $S^{\pi,R}(p_n^R)\le S(p_n)$ in the one-dimensional case, where $S^{\pi,R}$ is the Dirichlet form with periodic boundary condition. This holds as well in higher dimensions, besides we have $\frac{1}{1-\gamma}\sum_{z\in B_{Ra_n}}p_n^R(z)^\gamma\le\frac{1}{1-\gamma}\sum_{z\in\zd}p_n(z)^\gamma$ by subadditivity. Therefore it will be sufficient to prove that
\begin{equation}
\label{gammakonvergenz_db_3}
\liminf_{R\to\infty}\liminf_{n\to\infty}
a_n^2\bigg\{S^{\pi,Ra_n}\big(p_n^{R}\big)+\frac{\rho}{1-\gamma}a_n^{-2-d(1-\gamma)}\sum_{z\in B_{Ra_n}}\big(p_n^{R}(z)\big)^\gamma\bigg\}
    \ge \hat{\chi}_\gamma(\rho).
\end{equation}
Since $S^{\pi,R}(p_n^R)\le S(p_n)$, \eqref{finite} implies
\begin{equation}
\label{finite_per}
\sup_{n\in\nn}a_n^2S^{\pi,R}(p_n^R)<\infty.
\end{equation}

\runinhead{Step 3.} Our goal is to construct potential minimisers for $\hat{\chi}_\gamma(\rho)$ that interpolate the values of the rescaled step functions $h_n(x)=\sqrt{a_n^dp_n^R(\floor{a_nx})}$ on the lattice $\{x=z/a_n:\, z\in B_{Ra_n}\}$. In the present step, we define piecewise linear interpolations~$g_n\in\mathrm{H}^1(Q_R^{\ssup{n}})$ with $Q_R^{\ssup{n}}=[-R,R+a_n^{-1})^d$, which we will slightly modify in Step~4 in order to obtain normalised $\mathrm{H}^1(\rd)$-functions.

We borrow a technique from finite elements theory, see e.g.\ \cite{B07}. Consider the triangulation
$$
Q_R^{\ssup{n}}=\bigcup_{z\in B_{Ra_n}}\bigcup_{\sigma\in\mathfrak{S}_d}T_\sigma(z),
$$
where $\mathfrak{S}_d$ is the set of permutations of $1,\ldots,d$ and $T_\sigma(z)$ is the $d$-dimensional tetrahedron defined as the convex hull of the points $z,z+e_{\sigma(1)},\ldots,z+e_{\sigma(1)}+\cdots+e_{\sigma(d)}$, where $e_i$ is the $i$-th unit vector in~$\rd$. Note that the tetrahedra are disjoint up to the boundary. On each tetrahedron~$T_\sigma(z)$, we define a function
$$
g_{n,z,\sigma}(x)=b_{n,z,\sigma}^{\ssup{0}}+\sum_{k=1}^d b_{n,z,\sigma}^{\ssup{k}}(a_n x_{\sigma(k)}-z_{\sigma(k)}), \quad
x=(x_1,\ldots,x_d)\in T_\sigma(z),
$$
where the coefficients are given by
\begin{align*}
b_{n,z,\sigma}^{\ssup{0}}&=\sqrt{a_n^dp_n^R(z)}=h_n\Big(\frac{z}{a_n}\Big),\\
b_{n,z,\sigma}^{\ssup{k}}&=\sqrt{a_n^dp_n^R(z+e_{\sigma(1)}+\cdots+e_{\sigma(k)})}-\sqrt{a_n^dp_n^R(z+e_{\sigma(1)}+\cdots+e_{\sigma(k-1)})}
\end{align*}
for $k=1,\ldots,d$, where $p_n^R$ is continued periodically outside $B_{Ra_n}$. Then $g_{n,z,\sigma}$ satisfies
\begin{equation*}
g_{n,z,\sigma}(\tilde{z}/a_n)=h_n(\tilde{z}/a_n) \quad \text{for all }\tilde{z}\in T_\sigma(z)\cap\zd.
\end{equation*}
The values of all functions $g_{n,z,\sigma}$ on the common borders of their respective tetrahedra coincide; see \cite[Proof of Lemma 2.1]{BK10} for a detailed argument. Hence, the function $g_n\colon Q_R^{\ssup{n}}\to\rr$ given by
$$
g_n(x)=g_{n,z,\sigma}(x) \quad \text{if } x\in T_\sigma(z)
$$
is well-defined and continuous, and $g_n\in\mathrm{H}^1(Q_R^{\ssup{n}})$.

A direct calculation for the gradient gives $\partial_{x_{\sigma(k)}} g_n(x)=a_n b_{n,z,\sigma}^{\ssup{k}}$ and thus
\begin{equation}
\label{gradient}
\int_{Q_R^{\ssup{n}}}|\nabla g_n|^2 = a_n^2S^{\pi,R}(p_n^R).
\end{equation}
Note that by~\eqref{finite_per} this is bounded in $n$. Now consider the $\mathrm{L}^2$-norm of~$g$. Because of $|a_nx_{\sigma(k)}-z_{\sigma(k)}|\le 1$ and $b_{n,z,\sigma}^{\ssup{k}}=a_n^{-1}\partial x_{\sigma(k)} g_n(x)$ we obtain
$$
\norm{(g_n-h_n)\1_{Q_R^{\ssup{n}}}}_2^2
    \le a_n^{-2} \int_{Q_R^{\ssup{n}}}
            \Big(\sum_{i=1}^d\frac{\partial}{\partial x_i} g_n(x)\Big)^2 \,\D x.
$$
By Jensen's inequality, $(\sum_{i=1}^d c_i)^2\le d \sum_{i=1}^d c_i^2$. Since $\norm{h_n\1_{Q_R^{\ssup{n}}}}_2=1$, the triangle inequality gives
\begin{equation}
\label{l2norm}
|\norm{g_n\1_{Q_R^{\ssup{n}}}}_2-1|^2\le d a_n^{-2}\int_{Q_R^{\ssup{n}}}|\nabla g_n|^2,
\end{equation}
which tends to zero as $n\to\infty$ by~\eqref{gradient} and~\eqref{finite_per}.

A similar calculation for the $\mathrm{L}^{2\gamma}$-norm results in
$$
\norm{(g_n-h_n)\1_{Q_R^{\ssup{n}}}}_{2\gamma}^{2\gamma}
    \le d^\gamma a_n^{-2\gamma}\int_{Q_R^{\ssup{n}}}|\nabla g_n|^{2\gamma}.
$$
Because of $p_n^R(z)\in[0,1]$, we have $|b_{n,z,\sigma}^{\ssup{k}}|\le a_n^{d/2}$ and therefore $|\nabla g_n|^2\le da_n^{d+2}$. For $\gamma>1$, this yields
$$
|\nabla g_{n}(x)|^{2\gamma}=d^\gamma a_n^{(d+2)\gamma}\bigg(\frac{|\nabla g_{n}(x)|^2}{da_n^{d+2}}\bigg)^\gamma
        \le d^{\gamma-1} a_n^{2\gamma} a_n^{-2-d(1-\gamma)}|\nabla g_{n}(x)|^2.
$$
Now use triangle inequality to get
\begin{equation}
\label{gammanorm}
a_n^{d(\gamma-1)}\sum_{z\in B_{Ra_n}}\big(p_n^R(z)\big)^\gamma=\norm{h_n\1_{Q_R^{\ssup{n}}}}_{2\gamma}^{2\gamma}
\le \Big(\norm{g_n\1_{Q_R^{\ssup{n}}}}_{2\gamma}+c_n a_n^{\frac{-2-d(1-\gamma)}{2\gamma}}\Big)^{2\gamma},
\end{equation}
where $c_n=(d^{2\gamma-1}\int_{Q_R^{\ssup{n}}}|\nabla g_n|^{2})^{1/(2\gamma)}$ is bounded in $n$.

\runinhead{Step 4.} In order to adapt our function~$g_n$ to zero boundary conditions, we introduce a cut off function $\Psi_R(x)=\prod_{i=1}^d\psi_R(x_i)$, ${x=(x_1,\ldots,x_d)\in\rd}$, where $\psi_R=1$ on $[-R+\sqrt{R},R-\sqrt{R}]$, $\psi_R=0$ on $\rr\setminus[-R,R]$ and it interpolates linearly in-between. Then $0\le \psi_R\le 1$ and $|\psi_R'|\le 1/\sqrt{R}$. Let us estimate the relevant terms for the $\mathrm{H}^1(\rd)$-function $g_n\Psi_R$ (which is zero outside~$Q_R=[-R,R]^d$). As for the gradient,
\begin{align*}
\int_{\rd}\Big(\frac{\partial}{\partial x_i}(g_n\Psi_R)(x)\Big)^2\,\D x
&\le \int_{Q_R}\Big(\frac{\partial}{\partial x_i}g_n(x)\Big)^2\,\D x+\frac{1}{R}\int_{Q_R}g_n(x)^2\,\D x \\
&\quad +\frac{2}{\sqrt{R}}\sqrt{\int_{Q_R}\Big(\frac{\partial}{\partial x_i}g_n(x)\Big)^2\,\D x}
                       \sqrt{\int_{Q_R}g_n(x)^2\,\D x},
\end{align*}
where we used the properties of~$\psi_R$ and the Cauchy--Schwarz-inequality. Since all integrals are bounded (recall \eqref{gradient}, \eqref{l2norm} and \eqref{finite_per}), we find a~constant $c>0$ such that for all~$n$ and all $R$
\begin{equation}
\label{gradient_nrb}
\int_{\rd}|\nabla(g_n\Psi_R)(x)|^2\,\D x \le \int_{Q_R}|\nabla g_n(x)|^2\,\D x + \frac{c}{\sqrt{R}}.
\end{equation}

Our basic tool for estimating the $\mathrm{L}^2$- and $\mathrm{L}^{2\gamma}$-norm of $g_n\Psi_R$ is a variation of the shift lemma \cite[Lemma~3.4]{DV75}. Indeed, using the shift-invariance of the variational
problem because of periodic boundary conditions, the mass of a~nonnegative function on the boundary $Q_R\setminus Q_{R-\sqrt{R}}$ can, after suitable shifting, be estimated by its total mass on $Q_R$ times the quotient of the volumes. Applying this to $g_n^2+g_n^{2\gamma}$, we may assume that
$$
\int_{Q_R\setminus Q_{R-\sqrt{R}}} \big(g_n^2+g_n^{2\gamma}\big)
     \le \frac{d}{\sqrt{R}}\int_{Q_R} \big(g_n^2+g_n^{2\gamma}\big).
$$
Skipping the details, this leads to
\begin{equation}
\label{gammanorm_nrb}
\norm{g_n\Psi_R}_{2\gamma}^{2\gamma}\ge \Big(1-\frac{d}{\sqrt{R}}\Big)\norm{g_n\1_{Q_R^{\ssup{n}}}}_{2\gamma}^{2\gamma}-\frac{c}{\sqrt{R}}
\end{equation}
and, with use of \eqref{l2norm},
\begin{equation}
\label{l2norm_nrb}
|\norm{g_n\Psi_R}_{2}^{2}-1|\le \frac{c}{\sqrt{R}}
\end{equation}
for a suitable constant, not depending on $n$ or $R$, which we also denote $c>0$.

\runinhead{Step 5.} Now we put everything together to show~\eqref{gammakonvergenz_db_3}.
We use \eqref{gradient} and \eqref{gammanorm} and note that $1<\gamma<1+2/d$ to get
\begin{align*}
& \liminf_{n\to\infty}a_n^2\bigg\{S^{\pi,Ra_n}\big(p_n^{R}\big)
                        +\frac{\rho}{1-\gamma}a_n^{-2-d(1-\gamma)}\sum_{z\in B_{Ra_n}}\big(p_n^{R}(z)\big)^\gamma\bigg\}\\
& \ge \limsup_{n\to\infty}\Big(\int_{Q_R^{\ssup{n}}}|\nabla g_n|^2
    -\frac{\rho}{\gamma-1}\norm{g_n\1_{Q_R^{\ssup{n}}}}_{2\gamma}^{2\gamma}\Big).
\end{align*}
Next, we plug in \eqref{gradient_nrb} and \eqref{gammanorm_nrb} obtaining
\begin{align*}
& \liminf_{R\to\infty}\liminf_{n\to\infty}a_n^2\bigg\{S^{\pi,Ra_n}\big(p_n^{R}\big)
                        +\frac{\rho}{1-\gamma}a_n^{-2-d(1-\gamma)}\sum_{z\in B_{Ra_n}}\big(p_n^{R}(z)\big)^\gamma\bigg\}\\
& \ge \limsup_{R\to\infty}\limsup_{n\to\infty}\Big(\int_{\rd}|\nabla(g_n\Psi_R)|^2
    -\frac{\rho}{\gamma-1}\norm{g_n\Psi_R}_{2\gamma}^{2\gamma}\Big).
\end{align*}
With the help of \eqref{l2norm_nrb}, we can replace $g_n\Psi_R$ by its normalised version $g_n\Psi_R/\norm{g_n\Psi_R}_2$, which is a~candidate for the infimum in~$\hat{\chi}_\gamma(\rho)$. This yields the assertion.

\section{Proof for Phases 1--3 (Theorem~\textup{\ref{theo_phase1bis3}})}\label{sec-Proof1-3}

The proof of~(a) and~(b) is analogous to the proof of \cite[Theorem~1.2]{GM98} (see \cite{S10} for details), therefore we only sketch the idea here and omit all details, like compactification, cutting, or error terms.

Denote by $\ell_t(z)=\int_0^t\1_{\{X_s=z\}}\,\D s$ the local time of the random walk path~$(X_s)_{s\in[0,t]}$ with generator $2d\kappa(t)\Delta$ in the point $z\in\zd$. Starting from the Feynman--Kac formula~\eqref{FKform}, we apply the asymptotics~\eqref{konvergenz_h} to the normalised local times $\ell_t/t$. Heuristically, this gives
$$
\erwxi{U(t)}\E^{-H(t)} \approx \ee^{\ssup{t}}_0\Big[{\exp\Big(\kh(t)\sum_{z\in \zd}
                \rho\hat{H}\Big(\frac{\ell_t(z)}{t}\Big) (1+o(1)) \Big)}\Big], \quad t\to\infty.
$$
Denote by $\pp^{\ssup{t}}_{0}$ the probability measure related to~$\ee^{\ssup{t}}_{0}$. Under~$\pp^{\ssup{t}}_{0}$, the process $(\ell_t/t)_t$ satisfies a~large deviation principle on scale~$t\kappa(t)$ with rate function $p\mapsto -\scal{\Delta\sqrt{p}}{\sqrt{p}}$. In part~(a), the scale~$\kh(t)$ is asymptotically smaller than~$t\kappa(t)$, therefore the main contribution comes from the event that the process~$(X_s)_{s\in[0,t]}$ stays in the origin, which leads to formula~\eqref{asymp_phase1}. In part~(b), because of $\kh(t)\asymp t\kappa(t)$, an application of Varadhan's lemma gives~\eqref{asymp_phase2}.

The proof of~(c) follows mainly the arguments of \cite{HKM06} (who consider only~$\gamma=1$), adapting them to the new scale~$t\kappa(t)/\alpha_t^2$. The case $\gamma<1$ was treated in a~similar way in \cite{BK01}, whereas the case $\gamma>1$ did not appear originally in Phase~3. For convenience, we give a~universal derivation for all values~$\gamma\in[0,1+2/d)$.

 By an adaption of \cite[Prop.~3.4]{HKM06}, the rescaled and normalised local times
\begin{equation}
\label{localtime}
L_t(y)=\frac{\alpha_t^d}{t}\ell_t(\floor{\alpha_t y}), \quad y\in\rd,
\end{equation}
with $\alpha_t$ defined by~\eqref{alpha_def}, satisfy under $\pp^{\ssup{t}}_0(\,\cdot\,\1_{\{\supp L_t\subset Q_R\}})$ a~large deviation principle in the weak topology induced by test integrals against continuous functions, where we recall that $Q_R=[-R,R]^d$. The scale of the principle is $t\kappa(t)/\alpha_t^2$ and the rate function is $g^2\mapsto \int_{\rd}|\nabla g|^2$ for $g\in\mathrm{H}^1(\rd)$ with $\supp g\subset Q_R$ and $\norm{g}_2=1$.

For a~lower bound, we start again with~\eqref{FKform} and insert the indicator on the event $\{\supp L_t\subset Q_R\}$, using the notation $\ee_{0,R}^{\ssup{t}}[\,\cdot\,]$. After transforming
\begin{align}
\erwxi{U(t)}
    &\ge\ee_{0,R}^{\ssup{t}} \Big[\exp\Big(\sum_{z\in \zd}H(\ell_t(z))\Big)\Big]\nonumber\\
    &= \ee_{0,R}^{\ssup{t}} \Big[\exp\Big(\alpha_t^d\int_{Q_R}H\Big(\frac{t}{\alpha_t^{d}}L_t(y)\Big)\,\D y\Big)\Big]\nonumber\\
    &=\E^{\alpha_t^dH(\frac{t}{\alpha_t^{d}})}
        \ee_{0,R}^{\ssup{t}} \Big[\exp\Big(\frac{t\kappa(t)}{\alpha_t^2}
                    \int_{Q_R}\frac{H\big(\frac{t}{\alpha_t^{d}}L_t(y)\big)-L_t(y)H\big(\frac{t}{\alpha_t^{d}}\big)}
                    {\kh\big(\frac{t}{\alpha_t^{d}}\big)}\,\D y\Big)\Big],
\label{einzwischenschritt}
\end{align}
we restrict the integral to the part where $L_t(y)\le M$ for some $M>1$, noting that the integrand on the set $\{L_t(y)>M\}$ is nonnegative because of the convexity of~$H$. Then we apply the locally uniform asymptotics~\eqref{konvergenz_h}. Next, to get rid of the indicator on $\{L_t(y)\le M\}$, we introduce a~H\"older parameter $\eta\in(0,1)$ to separate the expectations over the whole integral and over the difference set $\{L_t(y)>M\}$. The expectation over the rest term can be shown to be negligible on the exponential scale $t\kappa(t)/\alpha_t^2$ (see \cite[pp.~86f]{S10}; here we use Lemma~\ref{regvar_h}(a) and the assumption that~$\gamma<2$). Finally, we apply the large deviation principle for $L_t$ and Varadhan's lemma; the lower semi-continuity of $g^2\mapsto\int_{Q_R}\hat{H}\circ g^2$ was proved in \cite[Lemma~3.5]{HKM06} for~$\gamma=1$ and can be shown similarly for all positive~$\gamma$. Summarizing, we obtain for $\gamma>0$\allowdisplaybreaks
\begin{align*}
&\liminf_{t\to\infty}\frac{\alpha_t^2}{t\kappa(t)}\log\big(\erwxi{U(t)}\E^{-\alpha_t^dH(t\alpha_t^{-d})}\big) \\
    &\ge \liminf_{M\to\infty}\liminf_{t\to\infty}\frac{\alpha_t^2}{t\kappa(t)}\log \ee_{0,R}^{\ssup{t}}\Big[%
            \exp\Big(\frac{t\kappa(t)}{\alpha_t^2}
             \int_{Q_R}\rho\hat{H}(L_t(y))\1_{\{L_t(y)\le M\}}\,\D y\,  \Big)\Big] \\
&\ge\liminf_{t\to\infty}\frac{\alpha_t^2}{t\kappa(t)}\log \ee_{0,R}^{\ssup{t}}\Big[%
     \exp\Big((1-\eta)\frac{t\kappa(t)}{\alpha_t^2}
             \int_{Q_R}\rho\hat{H}(L_t(y))\,\D y\,  \Big)\Big] \\
&\ge -\inf_{\substack{ g\in \mathrm{H}^1(\rd)\\ \supp g\subset Q_R\\ \norm{g}_2=1}}
        \Big\{\int_{Q_R}|\nabla g|^2- \rho(1-\eta)\int_{Q_R} \hat{H}\circ g^2  \Big\}.
\end{align*}
A standard argument shows that the compactified variational formula converges to~$\chic_\gamma(\rho)$ as $R\to\infty$ and $\eta\downarrow 0$. For the case~$\gamma=0$, we refer to \cite[pp.~85f]{S10}.

Now we prove the upper bound of~\eqref{asymp_phase3}. For technical reasons, we will not work with the large deviation principle, but use a~method derived in \cite{BHK07}. First, we compactify with the help of an eigenvalue expansion described in \cite{BK01} and applied in \cite{HKM06}. Replacing carefully $t$ by~$t\kappa(t)$ in their proofs, we find for $R>0$
\begin{equation}
\label{compact_upperbound}
\frac{\alpha_t^2}{t\kappa(t)}\log\erwxi{U(t)}\le \frac{C}{R^2}+\frac{\alpha_t^2}{t\kappa(t)}\log\erwxi{U_{{4R\alpha_t}}(t)}+o(1),
    \quad t\to\infty,
\end{equation}
with some constant $C>0$, where $U_{{R\alpha_t}}(t)=\ee_{0,R}^{\ssup{t}}[\E^{\int_0^t \xi(X_s)\,\D s}]$. Similarly to~\eqref{einzwischenschritt}, we can write
$$
\erwxi{U_{{R\alpha_t}}(t)}=\E^{\alpha_t^dH(\frac{t}{\alpha_t^{d}})}
\ee_{0,R}^{\ssup{t}} \Big[\exp\Big(\frac{t\kappa(t)}{\alpha_t^2}
\!\sum_{z\in B_{R\alpha_t}}\!\!\frac{H(\ell_t(z))-\frac{t}{\alpha_t^{d}}\ell_t(z)H\big(\frac{t}{\alpha_t^{d}}\big)}
                    {\kh\big(\frac{t}{\alpha_t^{d}}\big)}\Big)\Big],
$$
where we recall that $B_R=[-R,R]\cap\zd$.
We split the sum into the part where $\ell_t(z)\le Mt\alpha_t^{-d}$ and the rest where $\ell_t(z)> Mt\alpha_t^{-d}$ for some $M>1$, separating the respective expectations with H\"older's inequality. The rest term can again be neglected on the exponential scale~$t\kappa(t)/\alpha_t^2$, while an application of~\eqref{konvergenz_h} in the main term leads to
\begin{align}
&\limsup_{t\to\infty}\frac{\alpha_t^2}{t\kappa(t)}\log \Big( \erwxi{U_{{R\alpha_t}}(t)}\,\E^{-\alpha_t^dH(\frac{t}{\alpha_t^{d}})} \Big) \nonumber\\
&\le \limsup_{t\to\infty}\frac{\alpha_t^2}{t\kappa(t)}\log \ee_{0,R}^{\ssup{t}} \Big[\exp\Big(\tilde{\rho}\,\frac{t\kappa(t)}{\alpha_t^{d+2}}
                    \sum_{z\in B_{{R\alpha_t}}}\hat{H}\Big(\frac{\alpha_t^d}{t}\ell_t(z)\Big)
                                \1_{\{\ell_t(z)\le M\frac{t}{\alpha_t^d}\}}\Big)\Big],
\label{nocheinzwischenschritt}
\end{align}
where $\tilde{\rho}=\rho(1+\eta)$ with the H\"older parameter~$\eta\in(0,1)$. Next, we can omit the indicator on the event $\{\ell_t(z)\le M t \alpha_t^{-d}\}$ noting that the function~$\hat{H}$ is nonnegative on $[1,\infty)$.

We now need the mentioned tool from \cite{BHK07}, namely an explicit description of the local times density, which provides an upper bound on exponential functionals like in~\eqref{nocheinzwischenschritt} in the form of a~variational formula: Define
$$
G_t(p)=\alpha_t^{-(d+2)}\sum_{z\in \zd}\hat{H}(\alpha_t^d p(z))
$$
for $p\in\masz(\zd)$. Then, noting that our local times are related to a~random walk with generator~$2d\kappa(t)\Delta$, a~respective adaption in the formulation of \cite[Prop.~3.3]{HKM06} gives
\begin{align*}
&\ee_{0,R}^{\ssup{t}}\Big[\exp\Big(t\kappa(t)\tilde{\rho}\,G_t\big(\frac{\ell_t}{t}\big)\Big)\Big]\\
&\le \exp\Big(t\kappa(t)\sup_{\substack{p\in\masz(\zd)\\ \supp p\subset B_{{R\alpha_t}}}}                                        \big\{\tilde{\rho}\,G_t(p)+\scal{\Delta\sqrt{p}}{\sqrt{p}}\big\}\Big)(2dt\kappa(t))^{|B_{{R\alpha_t}}|}|B_{{R\alpha_t}}|\\
&\le \exp\Big({-}\frac{t\kappa(t)}{\alpha_t^2} \chi_t(\tilde{\rho})\Big) \,\E^{o(t\kappa(t)/\alpha_t^2)},
\end{align*}
where we put
$$
\chi_{t}(\tilde{\rho})=-\alpha_t^2\sup_{p\in\masz(\zd)}\big\{\tilde{\rho}\,G_t(p)+\scal{\Delta\sqrt{p}}{\sqrt{p}}\big\}.
$$
In the last step, we also used the properties of the scale function~$\alpha_t$ mentioned in Lemma~\ref{alpha_asymp} and the assumption $\kh(t)\gg\log t$.

Now a direct calculation shows that
$$
\chi_{t}(\tilde{\rho})=\begin{cases}
\alpha_t^2\chi^{\ssup{\rm{DE}}}\Big(\dfrac{\tilde{\rho}}{\alpha_t^2}\Big)+\tilde{\rho}\dfrac{d}{2}\log\alpha_t^2
        & \text{for } \gamma=1,\\[2mm]
\alpha_t^2\chi^{\ssup{\rm{DB}}}_\gamma\Big(\dfrac{\tilde{\rho}}{\alpha_t^{2+d(1-\gamma)}}\Big)+\tilde{\rho}\dfrac{1-\alpha_t^{-d(1-\gamma)}}{1-\gamma}
        & \text{for } \gamma\ne 1.
\end{cases}
$$
In both cases, we can apply Prop.~\ref{gammakonvergenz} with $\kappa=\alpha_t^{2+d(1-\gamma)}\to\infty$ for $t\to\infty$, since $\gamma<1+2/d$. Hence, $\chi_{t}(\tilde{\rho})$~converges to $\chi^{\ssup{\rm AB}}(\tilde{\rho})$ in the case~$\gamma=1$ and to $\chi^{\ssup{\rm B}}_\gamma(\tilde{\rho})$ in the case~$\gamma\ne 1$, i.e.~to~$\chic_\gamma(\tilde{\rho})$ in both cases. In summary, \eqref{nocheinzwischenschritt} becomes
\begin{align*}
&\limsup_{t\to\infty}\frac{\alpha_t^2}{t\kappa(t)}\log \Big(\erwxi{U_{{R\alpha_t}}(t)}\,\E^{-\alpha_t^dH(\frac{t}{\alpha_t^{d}})}\Big)
\le \limsup_{t\to\infty}(-\chi_{t}(\tilde{\rho}))
\le -\chic_\gamma(\tilde{\rho}).
\end{align*}
By a scaling argument, one can see that $\chic_\gamma(\tilde{\rho})=\chic_\gamma(\rho(1+\eta))$ converges to $\chic_\gamma(\rho)$ for $\eta\downarrow 0$.
Together with~\eqref{compact_upperbound}, the assertion~\eqref{asymp_phase3} is thus shown, which finishes the proof of Theorem~\ref{theo_phase1bis3}.

\section{Proof for Phase 4 (Theorem~\textup{\ref{theo_phase4}})}\label{sec-Proof4}

Phase~4 is characterised by the fact that the space--time scale ratio is constant: $\alpha_t=t^{1/d}$, i.e.~$t/\alpha_t^d=1$. We rescale both local times and potential,
$$
L_t(y)=\ell_t(\floor{\alpha_t y}) \quad \text{and} \quad \bar{\xi}_t(y)=\xi(\floor{\alpha_t y}), \quad y\in\rd.
$$
Note that because of $\kappa(t)=\kappa^*t^{2/d}(1+o(1))$, the definition of the rescaled (and normalised) local times is asymptotically equivalent to~\eqref{localtime}, hence we have again an LDP under $\pp^{\ssup{t}}_0(\,\cdot\,\1_{\{\supp L_t\subset Q_R\}})$ on scale $t\kappa(t)/\alpha_t^2\asymp t$ with rate function $g^2\mapsto \int_{\rd}|\nabla g|^2$ for $g\in\mathrm{H}^1(\rd)$ satisfying $\supp g\subset Q_R$ and $\norm{g}_2=1$.

We will frequently make use of arguments from \cite{GKS07}, in particular their main result on large deviations for the scalar product $\scal{L_t}{\bar{\xi}_t}$. The time parameter~$t$ in \cite{GKS07} is replaced by~$t\kappa(t)$ and our scale function~$\alpha_t=t^{1/d}$ corresponds to the \cite{GKS07}-scale at time~$t\kappa(t)$, multiplied by $(\kappa^*)^{-1/(d+2)}$. Thus, \cite[Thm.~1.3]{GKS07} reads
\begin{equation}
\label{thm_gks07}
\lim_{t\to\infty}\frac{1}{t}\log \pp_0^{\ssup{t}}\times\mathrm{Prob}\big(\scal{L_t}{\bar{\xi}_t}>u\big)=-(\kappa^*)^{d/(d+2)}\chi^{[\mathrm{GKS07}]}_H(u)
\end{equation}
for $u>0$ such that $u\in (\supp \xi(0))^\circ$, where $\mathrm{Prob}$ is the probability with respect to the potential and
$$
\chi^{[\mathrm{GKS07}]}_H(u)=\inf_{\substack{g\in \mathrm{H}^1(\rd)\\ \norm{g}_2=1}}\Big\{%
    \int_{\rd}|\nabla g(y)|^2\,\D y + \sup_{\beta>0}\big[\beta u - \int_{\rd}H(\beta g^2(y))\,\D y\big] \Big\}.
$$
By rescaling and duality, it turns out that the variational problem~$\chi^{\mathrm{(RWRS)}}_H$ that we wish to find in this proof is essentially the negative Legendre transform of $\chi^{[\mathrm{GKS07}]}_H$:
\begin{equation}
\label{gks07_rwrs}
\sup_{u>0}\big\{\beta u-\chi^{[\mathrm{GKS07}]}_H(u)\big\}=-\beta^{-2/d}\chi^{\mathrm{(RWRS)}}_H\big(\beta^{1+2/d}\big),\qquad \beta>0.
\end{equation}

Let us come to the lower bound of~\eqref{asymp_phase4}. A transformation of the Feyn\-man--Kac formula~\eqref{FKform} gives
$$
\erwxi{U(t)}=\erwxi{\ee_0^{\ssup{t}}\big[{\exp\big(t\scal{L_t}{\bar{\xi}_t}\big)}\big]}
            =\int_{\rr} t\E^{ut}\,\pp_0^{\ssup{t}}\times\mathrm{Prob}\big(\scal{L_t}{\bar{\xi}_t}> u\big)\,\D u.
$$
With the help of~\eqref{thm_gks07}, we can conclude for fixed $u>0$ and $\varepsilon>0$ that
\begin{align*}
\erwxi{U(t)}
&\ge \varepsilon t\E^{(u-\varepsilon)t}\,\pp_0^{\ssup{t}}\times\mathrm{Prob}\big(\scal{L_t}{\bar{\xi}_t}> u \big)\\
&= \exp\Big(t\big[u-\varepsilon -(\kappa^*)^{d/(d+2)}\chi^{[\mathrm{GKS07}]}_H(u)\big](1+o(1))\Big)
\end{align*}
as $t\to\infty$. Now let $\varepsilon\downarrow 0$, take the supremum over all~$u>0$ and use~\eqref{gks07_rwrs} for $\beta=(\kappa^*)^{-d/(d+2)}$ to finish the proof of the lower bound.

For the upper bound, we can first derive an analogue formula to~\eqref{compact_upperbound} to restrict the support of the local times on a~compact box (see \cite[Prop.~4.4.3]{S10} for details). Therefore, it suffices to consider $U_{R\alpha_t}(t)=\ee_{0,R}^{\ssup{t}}[\exp(t\scaltext{L_t}{\bar{\xi}_t})]$ for some large $R>0$ instead of~$U(t)$. We will use a~similar strategy as in the proof of the upper bound in~\cite[Thm.~1.3]{GKS07}: In order to be able to apply the LDP for the local times, we need to smooth the scenery, which we can only do after cutting it. For~$M>0$, introduce $\bar{\xi}_t^{\ssup{\le M}}=(\bar{\xi}_t \wedge M)\vee (-M)$ and $\bar{\xi}_t^{\ssup{>M}}=(\bar{\xi}_t-M)_+$. Then $\bar{\xi}_t\le \bar{\xi}_t^{\ssup{\le M}}+\bar{\xi}_t^{\ssup{>M}}$. We want to work with the convolution $\bar{\xi}_t^{\ssup{\le M}}\star j_\delta$ with $j_\delta=\delta^{-d}j(\cdot/\delta)$, where $j\ge 0$ is a~smooth, rotational invariant, $\mathrm{L}^1$-normalised function supported in~$Q_1$. For brevity, we will not explain in detail how to deal with the remainder terms $\ee_{0,R}^{\ssup{t}}[\exp(t\scaltext{L_t}{\bar{\xi}_t^{\ssup{>M}}})]$ and $\ee_{0,R}^{\ssup{t}}[\exp(t\scaltext{L_t}{\bar{\xi}_t^{\ssup{\le M}}-\bar{\xi}_t^{\ssup{\le M}}\star j_\delta})]$ (which can be separated from the main term by H\"older's inequality). For the smoothing, one can apply \cite[Lemma~3.5]{GKS07}, while the cutting is technically involved and follows the proof of \cite[(2.12)]{GK09} (here we need Lemma~\ref{regvar_h}(b) and $\gamma<2$). Let us in the following take for granted that it is enough to show
\begin{multline}
\label{asymp_phase4_enough}
\limsup_{M\to\infty}\limsup_{\delta\downarrow 0}\limsup_{t\to\infty}
    \frac{1}{t\kappa^*}\log\erwxi{\ee_{0,R}^{\ssup{t}}\big[\exp\big(t\scal{L_t}{\bar{\xi}_t^{\ssup{\le M}}\star j_\delta}\big)\big]} \\[-2mm]
            \le -\chi^{\mathrm{(RWRS)}}_H\Big(\frac{1}{\kappa^*}\Big).
\end{multline}
Denote $\ell_t^{\ssup{\delta}}(z)=\int_{z+[0,1)^d}L_t\star j_\delta(y/\alpha_t) \,\D y$, then by rotational invariance of~$j$, we have $t\scaltext{L_t}{\bar{\xi}_t^{\ssup{\le M}}\star j_\delta}=\sum_{z\in\zd}\ell_t^{\ssup{\delta}}(z)\xi^{\ssup{\le M}}(z)$, and $\xi^{\ssup{\le M}}(z)=(\xi \wedge M)\vee (-M)\le \xi(z)\vee(-M)$. Furthermore,
\begin{align*}
\erwxi{\ee_{0,R}^{\ssup{t}}\big[\E^{\sum_{z\in\zd}\ell_t^{\ssup{\delta}}(z)\xi(z)\vee(-M)}\big]}
&\le \erwxi{\ee_{0,R}^{\ssup{t}}\big[\E^{\sum_{z\in\zd}\ell_t^{\ssup{\delta}}(z)\xi(z)}\big]\1_{\{\xi(z)>-M\}}}\\
&\quad\ + \erwxi{\ee_{0,R}^{\ssup{t}}\big[\E^{-M\sum_{z\in\zd}\ell_t^{\ssup{\delta}}(z)}\big]\1_{\{\xi(z)\le -M\}}}.
\end{align*}
The second summand is negligible on exponential scale~$t$ for $t\to\infty$ and $M\to\infty$ because of $\sum_{z\in\zd}\ell_t^{\ssup{\delta}}(z)=t$. In the first summand, the definition of~$H$ and Jensen's inequality (for the probability measure $\1_{\{z+[0,1)^d\}}\,\D y$) yield
$$
\Erwxi{\exp\Big(\sum_{z\in\zd}\ell_t^{\ssup{\delta}}(z)\xi(z)\Big)}
\le \exp\Big( t\int_{\rd}H(L_t\star j_\delta(y)) \,\D y\Big).
$$
Now we are ready to apply Varadhan's lemma to derive for any $M>0$
\begin{multline*}
\limsup_{t\to\infty}\frac{1}{t\kappa^*}\log\erwxi{\ee_{0,R}^{\ssup{t}}\big[\E^{t\int_{\rd}H(L_t\star j_\delta(y)) \,\D y}\big]\1_{\{\xi(z)>-M\}}}\\
\le -\inf_{\substack{g\in \mathrm{H}^1(\rd)\\ \supp g\subset Q_R\\ \norm{g}_2=1}}\Big\{%
    \int_{\rd}|\nabla g|^2-\frac{1}{\kappa^*}\int_{\rd}H\big(g^2\star j_\delta(y)\big)\,\D y\Big\}.
\end{multline*}
Again Jensen's inequality for the probability measure $j_\delta$ and Fubini's theorem show that we receive an upper bound when omitting the convolution with~$j_\delta$. Thus, we have arrived at a~compactified version of our variational problem~$\chi^{\mathrm{(RWRS)}}_H(1/\kappa^*)$, which we can estimate against the whole-space problem. This shows~\eqref{asymp_phase4_enough} and completes the proof of the theorem.

\end{document}